\definecolor{darkblue}{rgb}{0.0, 0.0, 0.55}
\theoremstyle{plain}
\newtheorem{theorem}{Theorem}[section]
\newtheorem{lemma}[theorem]{Lemma}
\newtheorem{corollary}{Corollary}[theorem]
\newtheorem{definition}[theorem]{Definition}
\theoremstyle{definition}
\newtheorem{example}[theorem]{Example}
\newtheorem{remark}[theorem]{Remark}
\renewcommand\leq\leqslant
\renewcommand\geq\geqslant
\begin{document}

\title[On Groups whose cogrowth is the diagonal of a rational series]{On Groups whose cogrowth series is the diagonal of a rational series}
\subjclass[2020]{20F65, 05A15, 68R15, 20K35}
\keywords{%
	cogrowth series,
	Diagonal generating function,
	virtually abelian groups,
	Baumslag–Solitar groups}

\author{Alex Bishop}
\email{alexbishop1234@gmail.com}
\urladdr{https://alexbishop.github.io}
\address{%
	Section de mathématiques\\
	Université de Genève\\
	rue du Conseil-Général~7-9\\
	1205 Genève, Switzerland}

\begin{abstract}
	We show that if a group contains $\mathbb{Z}^n \times F_m$ as a finite-index subgroup, then its cogrowth series is the diagonal of a rational function for every generating set.
	This answers a question of Pak and Soukup on the cogrowth of virtually abelian groups; and generalises a result by Elder, Rechnitzer, Janse van Rensburg, and Wong on the cogrowth series of the Baumslag-Solitar groups $\mathrm{BS}(N,N)$.
\end{abstract}
\maketitle

\section{Introduction}

The \emph{cogrowth function} $c \colon \mathbb{N}\to \mathbb{N}$ of a finitely-generated group counts the words of a given length which represent the identity.
The asymptotics and complexity of the cogrowth function is of interest in the literature due to its applications to the study of amenability and the word problem.
We note here that in the literature the cogrowth function has also been defined to instead count the freely reduced words that represent the identity.

Let $G$ be a group with finite (monoid) generating set $X$.
In this paper, we study the \emph{cogrowth series} which is simply the generating function of the cogrowth function.
In particular, for each \textit{regular language} $R\subseteq X^*$, we consider the series $C_R(z)$ which counts the words in $R$ which represent the group identity.
Notice then that $C_R(z)$ can be specialised to either of the above definitions of cogrowth.
In particular, we write $C_{X^*}(z)$ for the cogrowth series with respect to all word, and $C_\textsc{Red}(z)$ for the cogrowth series with respect to the (regular) language of freely reduced words.

It is interesting to find correspondences between analytic characterisations of the cogrowth series and classes of groups.
Towards this goal, it was shown in the PhD thesis of Kuksov~\cite{KouksovThesis1998,Kouksov1998} that $C_{\textsc{Red}}(z)$ is a rational series if and only if the group is finite; and it can be shown from a result of Chomsky and Sch\"utzenberger~\cite{Chomsky1963} that each $C_R(z)$ is algebraic for virtually free groups (cf.~\cref{lem:dcf-iff-vfree,lem:det-implies-unambiguous,lem:gfun,lem:intersecion-dcf-regular}).
It is conjectured that $C_\mathrm{Red}(z)$ is algebraic if and only if the group is virtually free.
Furthermore, it is well known that for abelian groups, the series $C_\mathrm{Red}(z)$ is \textit{D-finite} with respect to any finite symmetric generating set (see~\cite[\S5.1]{KouksovThesis1998} or \cite[Theorem~1.2]{Humphries1997}).

In this paper, we are interested in groups for which the cogrowth series $C_R(z)$ can be expressed as the \textit{diagonal} of a (multivariate) rational function as defined in \cref{def:full-diag}.
We note here that the class of such generating functions contains the class of algebraic generating functions (see~\cref{cor:alg-diag-to-rat-diag}), and is contained in the class of \emph{D-finite} generating functions.

Our main objective in this paper is to prove the following.

\begin{restatable*}{theoremx}{ThmA}\label{thm:main}
	If $G$ contains a finite-index subgroup isomorphic to $\mathbb{Z}^n\times F_m$, for some $n,m\geq 0$, then for every finite (monoid) generating set $X$, and every regular language $R\subseteq X^*$, the cogrowth series, $C_R(z)$, can be written as the diagonal of a rational function.
\end{restatable*}

Recall that the \emph{Baumslag-Solitar groups}, $\mathrm{BS}(N,M)$, are defined as
\[
	\mathrm{BS}(N,M)
	=
	\left\langle
	a,t
	\mid
	t a^N t^{-1} = a^M
	\right\rangle
\]
where $N,M \in \mathbb{Z}$.
It is known from \cite{Elder2014} that the cogrowth series $C_{X^*}(z)$ and $C_\textsc{Red}(z)$ of the groups $\mathrm{BS}(N,N)$ are \textit{D-finite} with respect to the standard generating set $X = \{a,a^{-1},t,t^{-1}\}$.
In particular, from the proof of Theorem 4.1 in \cite{Elder2014} we see that the cogrowth series $C_{X^*}(z)$ can be written as the diagonal of an algebraic series in two variables.
Combining this result with \cref{cor:alg-diag-to-rat-diag} (in this paper), we see that $C_{X^*}(z)$ can be written as the diagonal of a rational function in 4 variables.
From \cref{thm:main} (and  \cref{lem:BS_NN}), we may generalise this result to all generating sets as follows.

\begin{restatable*}{corollaryx}{CorAa}\label{cor:main.a}
	The cogrowth series $C_R(z)$ of $\mathrm{BS}(N,N)$, for every $N \in \mathbb{Z}$, is the diagonal of a rational function for every finite (monoid) generating set, and every regular language $R$.
\end{restatable*}

Pak and Soukup~\cite[\S6.5]{Pak2022} asked if the cogrowth series of every finitely-generated virtually abelian group is the diagonal of a rational series for every generating set, and if this can be strengthened to the diagonal of an $\mathbb{N}$-rational series.
Notice that \cref{thm:main} allows us to immediately give an affirmative answer to the first half of this question.
Modifying the proof of \cref{thm:main}, we obtain an answer the second half of this question as follows.

\begin{restatable*}{theoremx}{ThmB}\label{thmB}
	The cogrowth series of a virtually abelian group, $C_R(z)$, is the diagonal of an $\mathbb{N}$-rational series for every finite monoid generating sets and every regular language $R$.
\end{restatable*}

This paper is organised as follows.
In \cref{sec:background}, we introduce our notation and recall the definitions of cogrowth series, \textit{regular languages}, and the \textit{Baumslag-Solitar groups} $\mathrm{BS}(N,N)$.
In \cref{sec:generating-functions}, we recall the definition and properties of algebraic generating functions, and prove \cref{cor:alg-diag-to-rat-diag} which we use in the proof of \cref{thm:main}.
In \cref{sec:formal-languages}, we recall the properties of deterministic and unambiguous context-free languages and provide a proof that the multivariate generating function of unambiguous context-free languages is algebraic.
In \cref{sec:coset-labelled,sec:word-problem}, we reduce the problem of computing the cogrowth series to a problem involving an unambiguous context-free language and a set $\mathcal{Z}\subseteq\mathbb{N}^{p}$ with $\mathbb{N}$-rational generating function.
We then prove our main results in \cref{sec:computing-cogrowth}.

\section{Notation and background}\label{sec:background}

Let $G$ be a group with finite (monoid) generating set $X = \{x_1, x_2,\ldots, x_s\}$,
that contains $H = \mathbb{Z}^n \times F_m$ as a subgroup of finite index $d = [G : H]$.
Then, without loss of generality, we may assume that $H$ is a normal subgroup as otherwise we may apply the following lemma to replace $H$ with a finite-index normal subgroup $H'$ which is isomorphic to $\mathbb{Z}^{n'}\times F_{m'}$ for some $n',m'\in \mathbb{N}$.

\begin{lemma}\label{lem:normal-subgroup}
	If $G$ contains $H = \mathbb{Z}^n\times F_m$ as a finite-index subgroup.
	Then $G$ contains a finite-index normal subgroup $H'\cong \mathbb{Z}^{n'} \times F_{m'}$ where $n',m'\in \mathbb{N}$.
\end{lemma}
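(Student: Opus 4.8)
The plan is to take $H'$ to be the normal core of $H$ in $G$, that is, $H' := \bigcap_{g\in G} gHg^{-1}$. First I would observe that since $[G:H]<\infty$ there are only finitely many distinct conjugates $gHg^{-1}$, each of finite index in $G$, so $H'$ is a finite intersection of finite-index subgroups and hence itself of finite index (one even gets $[G:H']\leq [G:H]!$); it is normal by construction and contained in $H$. This reduces the lemma to a purely structural statement about $H$, namely: \emph{every finite-index subgroup $K$ of $\mathbb{Z}^n\times F_m$ is isomorphic to $\mathbb{Z}^{n'}\times F_{m'}$ for suitable $n',m'\in\mathbb{N}$.}

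To prove that, I would use the projection $\pi\colon \mathbb{Z}^n\times F_m \to F_m$. Its image $\pi(K)$ has finite index in $F_m$, so by the Nielsen--Schreier theorem it is free of some finite rank $m'$; the kernel of the restriction $\pi|_K$ is $A := K\cap(\mathbb{Z}^n\times\{1\})$, a finite-index subgroup of $\mathbb{Z}^n$ and hence free abelian of rank $n':=n$. This exhibits $K$ as an extension
\[
 1 \longrightarrow A \longrightarrow K \xrightarrow{\ \pi|_K\ } \pi(K) \longrightarrow 1
\]
with $A\cong\mathbb{Z}^{n'}$ and $\pi(K)\cong F_{m'}$. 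Since $F_{m'}$ is free the extension splits, so $K\cong \mathbb{Z}^{n'}\rtimes F_{m'}$ for some action of $F_{m'}$ on $\mathbb{Z}^{n'}$; but $A$ lies inside the central factor $\mathbb{Z}^n\times\{1\}$ of $\mathbb{Z}^n\times F_m$, so $A$ is central in $K$, which forces the action to be trivial and the product to be direct. Hence $K\cong \mathbb{Z}^{n'}\times F_{m'}$, as required.

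The only point I would flag as a genuine (if minor) obstacle is the last step: upgrading the a priori merely semidirect decomposition of $K$ to a direct product. This is exactly where one uses that $\mathbb{Z}^n$ is a \emph{direct} — hence central — factor of $H$, not just a normal subgroup. Everything else is routine: index bookkeeping for the core, the Nielsen--Schreier theorem for the free quotient, and the classification of finite-index subgroups of $\mathbb{Z}^n$ for the kernel.
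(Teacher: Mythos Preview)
Your proof is correct and begins identically to the paper's by taking $H' = \mathrm{core}_G(H)$ and noting it is normal of finite index. The two arguments diverge on the structural step. The paper simply cites a result of Delgado and Ventura on arbitrary subgroups of $\mathbb{Z}^n \times F_m$ to conclude $H' \cong \mathbb{Z}^{n'} \times F$ with $F$ free (and $0 \le n' \le n+1$), then invokes finite generation to get $F \cong F_{m'}$. You instead prove the finite-index case directly and elementarily: project to $F_m$, apply Nielsen--Schreier to the image, observe the kernel is a finite-index (hence full-rank) subgroup of $\mathbb{Z}^n$, split the extension using freeness of the quotient, and upgrade the semidirect product to a direct one via centrality of $\mathbb{Z}^n \times \{1\}$ in $H$. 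Your route is self-contained and even yields the sharper conclusion $n' = n$; the paper's route is shorter on the page but imports a nontrivial external result whose full strength (arbitrary subgroups) is not needed here.
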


\begin{proof}
We begin by taking the normal core of $H$ as
\[
	H' \coloneqq\mathrm{core}_G(H)
	=
	\bigcap_{g\in G} g H g^{-1}.
\]
It is well known that $H'$ is a finite-index normal subgroup of both $G$ and $H$. 
Since $H'$ is a subgroup of $H=\mathbb{Z}^n \times F_n$, it follows from Proposition 1.5 and Corollary 1.7 in \cite{Delgado2013} that $H' \cong \mathbb{Z}^{n'} \times F$ where $0\leq n' \leq n+1$ and $F$ is a free group.
Since $H'$ is finite index in the finitely generated group $H$, we see that $H'$ and thus $F$ are finitely generated.
We may then conclude that $H' \cong \mathbb{Z}^{n'} \times F_{m'}$ for some $n',m'\in \mathbb{N}$.
\end{proof}

Moreover, for the proof of \cref{thmB}, we will also require the following result for the virtually abelian case.

\begin{lemma}\label{lem:normal-abelian}
	If $G$ is a finitely-generated virtually abelian group, then $G$ contains a finite-index normal subgroup $H$ which is isomorphic to $\mathbb{Z}^n$ for some $n \in \mathbb{N}$.
\end{lemma}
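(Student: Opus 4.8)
The plan is to follow the strategy of the proof of \cref{lem:normal-subgroup}, with the elementary structure theorem for finitely generated abelian groups playing the role that the results of \cite{Delgado2013} played there. Since $G$ is finitely generated and virtually abelian, it contains a finite-index abelian subgroup $A$, and $A$ is itself finitely generated because it has finite index in the finitely generated group $G$. By the structure theorem for finitely generated abelian groups we may write $A \cong \mathbb{Z}^n \times T$ with $T$ a finite abelian group. Writing $e$ for the exponent of $T$, consider the set of $e$-th powers $A^e = \{a^e : a \in A\}$, which is a subgroup of $A$ since $A$ is abelian. Under the isomorphism $A \cong \mathbb{Z}^n \times T$ this subgroup corresponds to $e\mathbb{Z}^n \times \{0\}$, so that $A^e \cong \mathbb{Z}^n$ and $A^e$ has finite index in $A$, hence also in $G$.

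Now, exactly as in \cref{lem:normal-subgroup}, take the normal core
\[
	H \coloneqq \mathrm{core}_G(A^e) = \bigcap_{g \in G} g\, A^e\, g^{-1},
\]
which is again a finite-index normal subgroup of $G$ (being the intersection of the finitely many conjugates of the finite-index subgroup $A^e$). Moreover $H$ is contained in $A^e \cong \mathbb{Z}^n$, and since $H$ has finite index in $G$ it also has finite index in $A^e$; as a finite-index subgroup of $\mathbb{Z}^n$ is itself isomorphic to $\mathbb{Z}^n$, we conclude that $H \cong \mathbb{Z}^n$, which is what we wanted.

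I do not anticipate any real difficulty in carrying this out. The only point worth care is that the desired subgroup must be \emph{simultaneously} normal in $G$ and torsion-free, which is why we first strip the torsion from $A$ using the $e$-th power map (an operation requiring no normality) and only afterwards pass to a core inside $G$; performing these two steps in the opposite order would work equally well, using instead that a characteristic subgroup of a normal subgroup is normal.
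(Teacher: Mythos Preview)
Your proof is correct and follows essentially the same route as the paper: pass from the finite-index abelian subgroup $A$ to a torsion-free finite-index subgroup isomorphic to $\mathbb{Z}^n$, take its normal core in $G$, and conclude by the fact that a finite-index subgroup of $\mathbb{Z}^n$ is again isomorphic to $\mathbb{Z}^n$. The only cosmetic difference is that the paper extracts the $\mathbb{Z}^n$ factor of $A\cong \mathbb{Z}^n\times T$ directly, whereas you realise it (up to isomorphism) as the image $A^e$ of the $e$-th power map; either choice works and the remainder of the argument is identical.
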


\begin{proof}
	By definition, $G$ must contain an abelian subgroup $A$ of finite index $d = [G:A]$, and thus $A$ is finitely generated.
	From the classification of finitely-generated abelian groups, we find that
	$
		A = F \times \mathbb{Z}^n
	$
	where $F$ is a finite group and $n \in \mathbb{N}$.
	We then see that $G$ contains $\mathbb{Z}^n$ as a subgroup of index $|F| \cdot d$.
	
	We now take the normal core of $\mathbb{Z}^n$ in $G$ as
	\[
		H \coloneqq\mathrm{core}_G(\mathbb{Z}^n)
		=
		\bigcap_{g\in G} g \mathbb{Z}^n g^{-1}
	\]
	which is a finite-index normal subgroup of both $G$ and $\mathbb{Z}^n$.
	
	From pp.~100-1 in \cite{Robinson1996}, we see that since $H$ is a finite-index subgroup of $\mathbb{Z}^n$, then $H \cong \mathbb{Z}^n$ as required.
\end{proof}

Fix a finite set $T = \{t_1 = 1,t_2,\ldots,t_d\} \subset G$ of (right) coset representatives of the normal subgroup $H = \mathbb{Z}^n\times F_m$ in $G$.
Notice then that for each element $g \in G$, there is a unique choice of $h \in H$ and $t \in T$ such that $g = h \cdot t$.
We define the map $\rho\colon G \to T$ such that $\rho(g) = t_i$ for each $g \in H \cdot t_i$.

For each word $w \in X^*$, we write $\overline{w}$ to denote the corresponding element of $G$.
Moreover, we write $|w|_X$ for the word length of $w \in X^*$, and $\ell_X(g)$ for the length of an element $g\in G$ with respect to $X$, i.e.,
\[
	\ell_X(g)
	=
	\min\{
		|w|_X \in \mathbb{N}
	\mid
		w\in X^*\text{ with }\overline{w} = g
	\}.
\]
We now recall the definition of a \emph{rational generating functions} as follows.

\subsection{Rational generating functions}\label{sec:rational-series}

We say that a multivariate power series $f(\mathbf{x}) \in \mathbb{Q}[[\mathbf{x}]]$ in variables $\mathbf{x} = (x_1,x_2,\ldots,x_k)$ is  \emph{rational} if there are polynomials $P(\mathbf{x}),Q(\mathbf{x}) \in \mathbb{Z}[\mathbf{x}]$ such that
$
	f(\mathbf{x})
	=
	P(\mathbf{x})/Q(\mathbf{x})
$.

In this paper, we also make use of a subclass of rational series known as the \emph{$\mathbb{N}$-rational} series.
In particular, the class of \emph{$\mathbb{N}$-rational} series in variables $\mathbf{x} = (x_1,x_2,\ldots,x_k)$ is the smallest subset of $\mathbb{N}[[\mathbf{x}]]$ which contains the polynomials $\mathbb{N}[\mathbf{x}]$
and is closed under addition, multiplication, and \textit{quasi-inverse},
that is, if
$f(\mathbf{x})$, $g(\mathbf{x})$ and $h(\mathbf{x})$ are $\mathbb{N}$-rational with $h(0,\ldots.,0)=0$, then so are
$f(\mathbf{x}) + g(\mathbf{x})$,
$f(\mathbf{x}) g(\mathbf{x})$ and
$1/(1-h(\mathbf{x}))$.

We now recall the definitions of \emph{regular languages} and show that their multivariate generating functions are $\mathbb{N}$-rational.

\subsection{Regular language}\label{sec:regular-language}

An \emph{alphabet} is a finite set of symbols $\Sigma$, and a \emph{language} is a subset $L\subseteq \Sigma^*$ of words in the letters of $\Sigma$.
We define the family of regular languages from the class of \emph{finite-state automata} as defined below.

A \emph{finite-state automaton} is a tuple $\mathcal{M} = (\Sigma,Q,\delta,q_\mathrm{start},A)$ where
\begin{itemize}
	\item $\Sigma$ is an alphabet;
	\item $Q$ is a finite set of \emph{states};
	\item $\delta\colon Q \times\Sigma \to  Q$ is the \emph{transition function};
	\item $q_\mathrm{start} \in Q$ is the start state; and
	\item $A \subseteq Q$ is the set of accepting states.
\end{itemize}
From this description, we construct a finite directed edge-labelled graph with vertex set $Q$, and an edge $q\to^a q'$ for each $\delta(q,a)=q'$.
Notice that for each word $w = w_1 w_2 \cdots w_k \in \Sigma^*$, there is exactly one path labelled as
\[
	q_\mathrm{start}
	\xrightarrow{w_1}
	q_1
	\xrightarrow{w_2}
	q_2
	\xrightarrow{w_3}
	\cdots
	\xrightarrow{w_k}
	q_k.
\]
In particular, we see that $p_{i+1} = \delta(p_{i},w_i)$ for each $i \in \{0,1,\ldots,k\}$ where we define $q_0 = q_\mathrm{start}$.
We then say that such a path is \emph{accepting} if the final state $q_k \in A$.
From this, we define the class of regular languages as follows.

\begin{definition}\label{def:regular-lang}
	A language $R \subseteq \Sigma^*$ is \emph{regular}, if there exists a finite-state automaton which accepts a word if and only if it belongs to $R$.
\end{definition}

For example, if $X$ is a finite generating set for a group, then both $X^*$ and the set of freely reduced words in $X^*$ are regular languages.
Regular languages satisfy the following well-known closure properties.

\begin{lemma}[Theorems 4.4, 4.8, 4.14 and 4.16 in \cite{Hopcroft2006}]\label{lem:reg-lang-closure}
	The family of regular language is closed under intersection, union, monoid homomorphism, and inverse monoid homomorphism.
\end{lemma}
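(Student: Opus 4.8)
The plan is to verify each of the four closure properties by an explicit finite-state automaton construction, working with the deterministic model of \cref{def:regular-lang} and invoking the (standard) equivalence between deterministic and nondeterministic automata only at the one point where it is genuinely needed. Each verification amounts to a short induction on word length.

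First, for intersection and union I would use the product construction. Given automata $\mathcal{M}_i = (\Sigma, Q_i, \delta_i, q^{(i)}_{\mathrm{start}}, A_i)$ recognising $R_i$ for $i = 1,2$, form $\mathcal{M} = (\Sigma, Q_1 \times Q_2, \delta, (q^{(1)}_{\mathrm{start}}, q^{(2)}_{\mathrm{start}}), A)$ with $\delta((p,q),a) = (\delta_1(p,a), \delta_2(q,a))$. An easy induction on $|w|_\Sigma$ shows that the unique path of $\mathcal{M}$ on $w$ ends at $(p,q)$ precisely when the paths of $\mathcal{M}_1$ and $\mathcal{M}_2$ on $w$ end at $p$ and $q$ respectively; hence $A = A_1 \times A_2$ gives an automaton for $R_1 \cap R_2$, and $A = (A_1 \times Q_2) \cup (Q_1 \times A_2)$ gives one for $R_1 \cup R_2$. (Alternatively, since the deterministic model here is complete, complementation is obtained by swapping $A$ and $Q \setminus A$, and union then follows from intersection by De Morgan.)

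Next, for inverse homomorphism, let $\phi \colon \Sigma^* \to \Gamma^*$ be a monoid homomorphism and let $R \subseteq \Gamma^*$ be recognised by $\mathcal{M} = (\Gamma, Q, \delta, q_{\mathrm{start}}, A)$. Extend $\delta$ to $\delta^* \colon Q \times \Gamma^* \to Q$ in the usual way ($\delta^*(q,\varepsilon) = q$ and $\delta^*(q,ua) = \delta(\delta^*(q,u),a)$), and define $\mathcal{M}' = (\Sigma, Q, \delta', q_{\mathrm{start}}, A)$ by $\delta'(q,a) = \delta^*(q,\phi(a))$ for $a \in \Sigma$. By induction on $|w|_\Sigma$, the path of $\mathcal{M}'$ on $w = a_1 \cdots a_k$ ends at $\delta^*(q_{\mathrm{start}}, \phi(a_1)\cdots\phi(a_k)) = \delta^*(q_{\mathrm{start}}, \phi(w))$, so $\mathcal{M}'$ accepts $w$ iff $\mathcal{M}$ accepts $\phi(w)$; that is, $\mathcal{M}'$ recognises $\phi^{-1}(R)$.

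Finally, for the forward image $\phi(R)$ of a regular $R \subseteq \Sigma^*$, I would pass through nondeterministic automata: take an automaton for $R$ and replace each edge $q \xrightarrow{a} q'$ by a fresh directed path from $q$ to $q'$ spelling out $\phi(a)$ (a single $\varepsilon$-edge when $\phi(a) = \varepsilon$), keeping the same start and accepting states. The resulting $\varepsilon$-NFA accepts exactly $\phi(R)$, and determinising it via the subset construction (taking $\varepsilon$-closures) produces a deterministic automaton in the sense of \cref{def:regular-lang}. This last item is the only place where the argument leaves the deterministic model of the excerpt, so the appeal to the NFA/DFA equivalence is the main — and only — subtlety; everything else is routine. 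Since all four statements are classical, this is exactly why they are quoted from \cite{Hopcroft2006}, and I would keep the write-up to these constructions together with the one-line inductive checks.
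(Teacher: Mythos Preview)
Your proposal is correct and gives exactly the standard textbook constructions. Note, however, that the paper does not supply its own proof of this lemma at all: it is stated with a citation to \cite{Hopcroft2006} and used as a black box, so there is nothing to compare against beyond observing that your product, pullback, and relabel-then-determinise constructions are precisely the ones found in that reference.
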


It is also well known that the multivariate generating function of a regular language is $\mathbb{N}$-rational.
We provide a short proof of this as follows.

\begin{lemma}\label{lem:regular_are_N-rational}
	Let $R\subseteq\Sigma^*$ be a regular language with $\Sigma = \{a_1,\ldots,a_{n}\}$, and define the multivariate generating function of $R$ as
	\[
		f(x_1,\ldots,x_{n},z)
		=
		\sum_{k_1,\ldots,k_{n}\in \mathbb{N}}
		c(k_1,\ldots,k_{n})
		x_1^{k_1}\cdots x_{n}^{k_n} z^{k_1+\cdots+k_{n}}
	\]
	where $c(k_1,\ldots,k_{n})$ counts the words in $R$ which, for each $i$, contain exactly $k_i$ instances of the letter $a_i$.
	Then, $f(x_1,\ldots,x_{n},z)$ is $\mathbb{N}$-rational.
\end{lemma}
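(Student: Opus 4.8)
The plan is to argue directly from the finite-state automaton $\mathcal{M} = (\Sigma, Q, \delta, q_\mathrm{start}, A)$ recognising $R$, using a standard transfer-matrix / Kleene-style construction but keeping track of letter multiplicities via one commuting variable per letter. First I would introduce, for each pair of states $p,q \in Q$, the set $L_{p,q} \subseteq \Sigma^*$ of words labelling a path from $p$ to $q$ in the graph of $\mathcal{M}$, and let $f_{p,q}(x_1,\ldots,x_n,z)$ be the associated multivariate generating function weighting a word $w$ by $x_1^{k_1}\cdots x_n^{k_n} z^{|w|}$, where $k_i$ is the number of occurrences of $a_i$ in $w$ (note $\sum_i k_i = |w|$, so the $z$-exponent is automatically $k_1+\cdots+k_n$). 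Since $R = \bigcup_{q \in A} L_{q_\mathrm{start}, q}$ is a disjoint union (each word has a unique path), we get $f = \sum_{q \in A} f_{q_\mathrm{start}, q}$, so it suffices to show each $f_{p,q}$ is $\mathbb{N}$-rational, as $\mathbb{N}$-rational series are closed under finite sums.

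The core step is an induction mirroring the proof that regular languages are closed under the regular operations (Kleene's theorem). Order the states $q_1,\ldots,q_{|Q|}$ and let $L_{p,q}^{(k)}$ denote the set of words labelling a path from $p$ to $q$ all of whose intermediate states lie in $\{q_1,\ldots,q_k\}$, with generating function $f_{p,q}^{(k)}$. For $k=0$, $L_{p,q}^{(0)}$ is the finite set of single letters $a_i$ with $\delta(p,a_i)=q$, together with the empty word when $p=q$; its generating function is a polynomial in $\mathbb{N}[x_1,\ldots,x_n,z]$, hence $\mathbb{N}$-rational. For the inductive step, the standard decomposition of paths by their visits to $q_{k+1}$ gives the set identity
\[
	L_{p,q}^{(k+1)}
	=
	L_{p,q}^{(k)}
	\cup
	L_{p,q_{k+1}}^{(k)}\bigl(L_{q_{k+1},q_{k+1}}^{(k)}\bigr)^{*}L_{q_{k+1},q}^{(k)},
\]
and moreover this union and all the concatenations are \emph{unambiguous} — every word in the right-hand set factors in exactly one way, because a path with intermediate states in $\{q_1,\ldots,q_{k+1}\}$ is uniquely determined and its decomposition at the successive occurrences of $q_{k+1}$ is forced. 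Unambiguous union corresponds to addition of generating functions, unambiguous concatenation to multiplication, and the starred factor contributes the quasi-inverse $1/(1 - f_{q_{k+1},q_{k+1}}^{(k)})$, which is legitimate since any nonempty path contains at least one letter, so $f_{q_{k+1},q_{k+1}}^{(k)}$ has zero constant term. Hence
\[
	f_{p,q}^{(k+1)}
	=
	f_{p,q}^{(k)}
	+
	f_{p,q_{k+1}}^{(k)}\cdot\frac{1}{1 - f_{q_{k+1},q_{k+1}}^{(k)}}\cdot f_{q_{k+1},q}^{(k)},
\]
and by the inductive hypothesis and the closure of $\mathbb{N}$-rational series under addition, multiplication, and quasi-inverse, $f_{p,q}^{(k+1)}$ is $\mathbb{N}$-rational. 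Taking $k = |Q|$ gives $f_{p,q} = f_{p,q}^{(|Q|)}$ $\mathbb{N}$-rational, and summing over accepting states finishes the proof.

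The only real subtlety — and the step I would write out most carefully — is the unambiguity claim: one must verify that the set-theoretic decomposition above is genuinely a partition of each word's factorisations, so that the passage from set operations to the corresponding series operations ($+$, $\times$, quasi-inverse) is exact rather than an overcount. This is where the determinism of $\mathcal{M}$ (the transition \emph{function}, not relation) is used: it guarantees that a word determines its path, hence its sequence of visits to $q_{k+1}$, hence its factorisation. Everything else is the routine bookkeeping that the exponent of $z$ equals the total letter count and that constant terms vanish where quasi-inverses are taken.
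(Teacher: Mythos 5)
Your route is the McNaughton--Yamada state-elimination argument, computing generating functions $f_{p,q}^{(k)}$ indexed by ordered pairs of states and an elimination level, whereas the paper instead sets up a linear system in $|Q|$ unknowns $F_q$ (the series counting words that drive $q$ into an accepting state) and eliminates by substitution. Both are reasonable ways to get the transfer-matrix rationality, but as written your proof has a genuine error at the quasi-inverse step, which is exactly the one place in such arguments where $\mathbb{N}$-rationality is delicate.

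You include $\varepsilon$ in $L_{p,p}^{(0)}$, and this persists up the induction: $\varepsilon\in L_{q_{k+1},q_{k+1}}^{(k)}$ for every $k$, so $f_{q_{k+1},q_{k+1}}^{(k)}$ has constant term $1$, not $0$. Your justification --- ``any nonempty path contains at least one letter'' --- is true but beside the point, since it is precisely the \emph{empty} path that contributes the offending constant term. Consequently $1-f_{q_{k+1},q_{k+1}}^{(k)}$ vanishes at the origin, so $\bigl(1-f_{q_{k+1},q_{k+1}}^{(k)}\bigr)^{-1}$ is not a well-defined formal power series and is certainly not an admissible $\mathbb{N}$-rational quasi-inverse (which requires the inner series to have zero constant term). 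Relatedly, the concatenation $L_{p,q_{k+1}}^{(k)}\bigl(L_{q_{k+1},q_{k+1}}^{(k)}\bigr)^{*}L_{q_{k+1},q}^{(k)}$ is infinitely ambiguous as soon as $\varepsilon$ lies in any factor, and the union with $L_{p,q}^{(k)}$ need not be disjoint. Determinism of $\mathcal{M}$ does not rescue this: it pins down the unique path of a word and hence the unique nontrivial factorisation, but it does nothing to control the free insertion of $\varepsilon$-factors. To repair the argument while staying inside $\mathbb{N}$-rational series (you cannot simply subtract $1$ from $f_{q_{k+1},q_{k+1}}^{(k)}$, since subtraction is not an allowed operation), run the induction on the series $g_{p,q}^{(k)}$ of \emph{nonempty} paths, i.e.\ drop $\varepsilon$ from the base case. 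Then every $g_{q_{k+1},q_{k+1}}^{(k)}$ has zero constant term, the decomposition of a path at its successive intermediate visits to $q_{k+1}$ really is a bijection, the recurrence
\[
g_{p,q}^{(k+1)} = g_{p,q}^{(k)} + g_{p,q_{k+1}}^{(k)}\,\frac{1}{1-g_{q_{k+1},q_{k+1}}^{(k)}}\,g_{q_{k+1},q}^{(k)}
\]
is a legitimate $\mathbb{N}$-rational identity, and one recovers $f=\sum_{q\in A} g_{q_\mathrm{start},q}^{(|Q|)}$, plus an extra $1$ when $q_\mathrm{start}\in A$ to account for the empty word. With that change the rest of your plan goes through.
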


\begin{proof}

Let $R\subseteq \Sigma^*$ be a regular language, and let $\mathcal{M} = (\Sigma,Q,\delta,q_\mathrm{start},A)$ be a finite-state automaton which accepts $R$.

For each state $q\in Q$, we define $F_q(\mathbf{x},z)$ to be the generating function of the language
\[
	L_q
	=\left\{
		w = w_1 w_2 \cdots w_p \in \Sigma^*
	\ \middle|\ 
		q \xrightarrow{w_1}
		q_1\xrightarrow{w_2}
		\cdots
		\xrightarrow{w_p} q_p
	\text{ where }
		q_p \in A
	\right\}.
\]
We notice that $f(\mathbf{x},z) = F_{q_\mathrm{start}}(\mathbf{x},z)$.

We then see that for each $q \in Q$, we have the recurrence
\begin{align*}
	F_{q}(\mathbf{x},z)
	&=
	1+
	\sum_{i = 1}^{|\Sigma|}
	x_i z\,
	F_{\delta(q,a_i)}(\mathbf{x},z)
	&&\text{if }q \in A;\text{ and}
	\\
	F_{q}(\mathbf{x},z)
	&=
	\sum_{i = 1}^{|\Sigma|}
	x_i z\,
	F_{\delta(q,a_i)}(\mathbf{x},z)
	&&\text{if }q \notin A.
\end{align*}

Solving these recurrences by substitution, we find that each $F_{q}(\mathbf{x},z)$ is $\mathbb{N}$-rational and thus $f(\mathbf{x},z) = F_\mathrm{start}(\mathbf{x},z)$ is $\mathbb{N}$-rational.
\end{proof}

\subsection{Cogrowth series}

Let $R\subseteq X^*$ be a regular language.
We define the \emph{cogrowth series of $G$ with respect to $R$} as
\begin{equation}\label{eq:cogrowth}
	C_{R}(z)
	\coloneqq
	\sum_{k=0}^\infty c_n z^n \in \mathbb{N}[[z]]
\end{equation}
where each coefficient
\[
	c_n
	=
	\#
	\{
		w\in R\subseteq X^*
	\mid
		|w|_X = n\text{ and } \overline{w}=1
	\}
\]
counts the words of length $n$ which represent the group identity.

In this paper, we show that the cogrowth series of certain groups can be written as the diagonal of rational series.
Thus, we define the \emph{diagonal} of a multivariate power series as follows.

\subsection{Formal power series and diagonals}

We first define the \emph{primitive diagonals} of $f(\mathbf{x})$ in variables $\mathbf{x}= (x_1,\ldots,x_p)$ as follows.
This definition is used in the statement and proof of \cref{lem:algebraic-diag-rat,cor:alg-diag-to-rat-diag}.

\begin{definition}\label{def:partial-diag}
	Let $f(\mathbf{x}) \in \mathbb{Q}[[\mathbf{x}]]$ be a multivariate power series in variables $\mathbf{x} = (x_1,x_2,\ldots,x_p)$,
	then the \emph{primitive diagonal} $I_{x_1,x_2} f$ is defined such that
	\[
		I_{x_1,x_2} f
		(x_1,x_3,x_4,\ldots,x_p)
		=
		\sum_{k_1,k_3,\ldots,k_p,\in \mathbb{N}}
		a(k_1,k_1,k_3,\ldots,k_p)
		x_1^{k_1} x_3^{k_3}\cdots x_p^{k_p}.
	\]
	That is, we select the terms where the exponent of $x_1$ and $x_2$ are equal.
	Notice that $I_{x_1,x_2} f$ is a power series over the set of variables $\mathbf{x}\setminus\{x_2\}$.
	
	For each $i<j$, we can analogously define the primitive diagonal $I_{x_i, x_j} f$ as a generating function over the variables $\mathbf{x}\setminus\{x_j\}$.
\end{definition}

We may then define the \emph{(complete) diagonal} of a power series as follows.

\begin{definition}\label{def:full-diag}
	Let $f(\mathbf{x})\in \mathbb{Q}[[\mathbf{x}]]$ be a multivariate power series in the variables $\mathbf{x} = (x_1,x_2,\ldots,x_p)$, then the \emph{(complete) diagonal} of $f(\mathbf{x})$ is
	\[
		\mathrm{Diag}(f)(z)
		=
		\left(I_{x_1,x_2} I_{x_1,x_3} \cdots I_{x_1, x_p} f\right)(z)
		=
		\sum_{k\in \mathbb{N}}
		a(k,k,\ldots,k)
		\, z^k.
	\]
	Notice that the complete diagonal is a univariate power series in $\mathbb{Q}[[z]]$.
\end{definition}

In this paper, we are interested in showing that the cogrowth series can be written as the complete diagonal of a rational series as in \cref{sec:rational-series}.

\subsection{Baumslag-Solitar groups}

An application of the results in this paper is to the Baumslag-Solitar groups of the form $\mathrm{BS}(N,N)$.
Towards this result, we provide the following characterisation.

\begin{lemma}\label{lem:BS_NN}
	For each $N \geq 1$, $\mathrm{BS}(N,N)$ is virtually $\mathbb{Z} \times F_N$.
\end{lemma}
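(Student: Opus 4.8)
The plan is to realise $\mathrm{BS}(N,N)$ explicitly as a group acting on a suitable object, or better, to exhibit a surjection onto a finite group whose kernel is $\mathbb{Z}\times F_N$. The cleanest route uses the well-known description $\mathrm{BS}(N,N) \cong F_N \rtimes \mathbb{Z}$ is \emph{not} quite right; instead recall that $\mathrm{BS}(N,N) = \langle a,t \mid t a^N t^{-1} = a^N\rangle$, so $a^N$ is central. Thus the plan is to use the central element $z = a^N$ and consider the quotient $\mathrm{BS}(N,N)/\langle z \rangle$, but that quotient is infinite, so this alone does not finish it. The better approach: map $\mathrm{BS}(N,N) \to \mathbb{Z}/N\mathbb{Z}$ by sending $a \mapsto 1$ and $t\mapsto 0$; this is well-defined since the relator maps to $N \cdot 1 - N\cdot 1 = 0$. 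I would then identify the kernel $H$ of this homomorphism, which has index $N$, and show $H \cong \mathbb{Z}\times F_N$.

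The key steps, in order: First, verify the homomorphism $\phi\colon \mathrm{BS}(N,N)\to \mathbb{Z}/N\mathbb{Z}$ above is well-defined and surjective, so $H = \ker\phi$ has index $N$. Second, compute a presentation (or a generating set) for $H$ using the Reidemeister--Schreier procedure with Schreier transversal $\{1, a, a^2, \ldots, a^{N-1}\}$; the Schreier generators will be $z = a^N$ (the ``wrap-around'' of $a$) together with the conjugates $t_i = a^i t a^{-i}$ for $i = 0,1,\ldots,N-1$. Third, read off from the single relator $ta^Nt^{-1}=a^N$ that each $t_i$ commutes with $z$ and that there are no other relations among the $t_i$; conclude $H = \langle z\rangle \times \langle t_0,\ldots,t_{N-1}\rangle \cong \mathbb{Z}\times F_N$. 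Fourth, invoke \cref{lem:normal-subgroup} (or simply note $\ker\phi$ is already normal) to conclude $\mathrm{BS}(N,N)$ is virtually $\mathbb{Z}\times F_N$.

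The main obstacle is the bookkeeping in the Reidemeister--Schreier computation: one must carefully track how the relator $r = t a^N t^{-1} a^{-N}$ and its cyclic conjugates by the transversal elements rewrite in terms of the Schreier generators, and verify that after eliminating redundant generators exactly the relations ``$t_i$ central-commutes with $z$'' survive, with $z$ generating a $\mathbb{Z}$ and the $t_i$ generating a free group of rank $N$. An alternative that sidesteps this is geometric: $\mathrm{BS}(N,N)$ acts on the Bass--Serre tree of the HNN extension with vertex and edge stabilisers $\mathbb{Z} = \langle a\rangle$, and the quotient graph is a single loop; passing to the kernel of $\phi$, the induced action has trivial edge-to-vertex index data, so $H$ splits as the fundamental group of a graph of groups with all groups equal to $\mathbb{Z} = \langle a^N\rangle$ and underlying graph a wedge of $N$ circles, giving $H \cong \mathbb{Z}\times F_N$ directly. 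I would present whichever of the two is shorter, probably the homomorphism-plus-Reidemeister--Schreier version since it is elementary and self-contained.
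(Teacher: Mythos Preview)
Your proposal is correct and lands on exactly the same subgroup as the paper's proof: the paper simply writes down the index-$N$ subgroup generated by $a^N$ together with the conjugates $a^i t a^{-i}$ for $0\le i\le N-1$, observes $\langle a^N\rangle$ is central, and uses the transversal $\{1,a,\ldots,a^{N-1}\}$. Your homomorphism $\phi\colon \mathrm{BS}(N,N)\to\mathbb{Z}/N\mathbb{Z}$ with $a\mapsto 1$, $t\mapsto 0$ has precisely this subgroup as its kernel, and your Reidemeister--Schreier generators are the paper's generators; the only difference is packaging (the paper asserts the decomposition directly, you derive it systematically), so the approaches are essentially the same.
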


\begin{proof}
	Let
	\[
		\mathrm{BS}(N,N)
		=
		\left\langle
			a,t
		\mid
			t a^N t^{-1} = a^N
		\right\rangle.
	\]
	Then, we see that $\mathrm{BS}(N,N)$ contains a free subgroup generated as
	\[
		F_N
		=
		\left\langle
			t,\,
			a t a^{-1},\,
			a^2 t a^{-2},\,
			a^3 t a^{-3},\,\ldots,\,
			a^{N-1} t a^{1-N}
		\right\rangle.
	\]
	Moreover, we see that the centre of $\mathrm{BS}(N,N)$ is a cyclic group generated as
	\[
		\mathbb{Z} = \left\langle a^N \right\rangle.
	\]
	Thus,
	\[
		\mathbb{Z} \times F_N
		=
		\left\langle a^N \right\rangle
		\times
		\left\langle
			t,\,
			a t a^{-1},\,
			a^2 t a^{-2},\,
			a^3 t a^{-3},\,\ldots,\,
			a^{N-1} t a^{1-N}
		\right\rangle
	\]
	is a subgroup of $\mathrm{BS}(N,N)$.
	We then see that
	\[
		\mathrm{BS}(N,N)
		=
		\left(\mathbb{Z} \times F_N\right)
		\cdot
		\{
			1, a, a^2, \ldots, a^{N-1}
		\}
	\]
	and thus, $\mathrm{BS}(N,N)$ contains $\mathbb{Z} \times F_N$ as a subgroup of index $N$.
\end{proof}

\section{Algebraic Generating Functions}\label{sec:generating-functions}

In \cref{thm:main}, we show that the generating function for a certain class of groups can be written as the diagonal of a rational series.
We accomplish this by first showing that the generating function can be written as the diagonal of an algebraic series, then using a result of Denef and Lipshitz~\cite{Denef1987}, we strengthen this to the diagonal of a rational series.
Thus, we begin by recalling the definition of an algebraic generating function as follows.
In this section, we write $\mathbf{x}$ for a finite number of variables $\mathbf{x} = (x_1,x_2,\ldots,x_p)$.

\begin{definition}
	A generating function $a(\mathbf{x}) \in \mathbb{Z}[[\mathbf{x}]]$ is called \emph{algebraic} if there is a non-trivial polynomial $P(\mathbf{x},z) \in \mathbb{Z}[\mathbf{x},z]$ such that $P(\mathbf{x},a(\mathbf{x})) = 0$. 
\end{definition}

It is well known that the class of algebraic series in $\mathbb{Z}[[\mathbf{x}]]$ form a ring and that rational generating functions are a subset of the algebraic ones.
We provide a short proof of these properties as follows.

\begin{lemma}\label{lem:alg-closure}
	Every rational generating function in $\mathbb{Z}[[\mathbf{x}]]$ is algebraic, and algebraic generating functions are closed under addition and multiplication.
\end{lemma}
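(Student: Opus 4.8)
The plan is to verify each claim by an elimination/resultant argument. First, for rationality implying algebraicity: if $f(\mathbf{x}) = P(\mathbf{x})/Q(\mathbf{x})$ with $Q(0,\dots,0) \neq 0$ (which we may assume, since $f \in \mathbb{Z}[[\mathbf{x}]]$ forces the constant term of $Q$ to be nonzero after clearing common factors), then $Q(\mathbf{x}) \cdot z - P(\mathbf{x})$ is a nontrivial polynomial in $\mathbb{Z}[\mathbf{x}, z]$ that vanishes at $z = f(\mathbf{x})$. This is immediate.

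Next, for closure under addition and multiplication: suppose $a(\mathbf{x})$ and $b(\mathbf{x})$ are algebraic, witnessed by nontrivial polynomials $P(\mathbf{x}, z)$ and $S(\mathbf{x}, z)$ with $P(\mathbf{x}, a) = 0$ and $S(\mathbf{x}, b) = 0$. I would form the resultant with respect to an auxiliary variable to eliminate one of the two roots. Concretely, to show $a + b$ is algebraic, consider $R_+(\mathbf{x}, z) = \mathrm{Res}_y\bigl(P(\mathbf{x}, y),\, S(\mathbf{x}, z - y)\bigr)$, a polynomial in $\mathbb{Z}[\mathbf{x}, z]$; it vanishes when $z - y = b$ and $y = a$, i.e.\ at $z = a + b$, since then the two polynomials in $y$ share the common root $y = a$. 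Similarly, for the product, $R_\times(\mathbf{x}, z) = \mathrm{Res}_y\bigl(P(\mathbf{x}, y),\, y^{\deg_z S}\, S(\mathbf{x}, z/y)\bigr)$ (homogenising to clear denominators) vanishes at $z = a \cdot b$.

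The one genuine subtlety — and the step I expect to be the main obstacle — is checking that these resultants are \emph{not identically zero} as polynomials in $\mathbb{Z}[\mathbf{x}, z]$, since $\mathrm{Res}_y$ can vanish identically if the two arguments share a common factor for \emph{all} values of $z$. To handle this cleanly, I would work over the field $\mathbb{Q}(\mathbf{x})$ (or its algebraic closure) and use that $a(\mathbf{x})$ is algebraic over $\mathbb{Q}(\mathbf{x})$ of some finite degree; then $a + b$ and $ab$ lie in the finite field extension $\mathbb{Q}(\mathbf{x})(a, b)$, hence are algebraic over $\mathbb{Q}(\mathbf{x})$, and their minimal polynomials can be scaled to lie in $\mathbb{Z}[\mathbf{x}, z]$. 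This field-theoretic phrasing sidesteps the non-vanishing issue entirely: sums and products of elements algebraic over a field form a ring (indeed the algebraic closure is a field). One then only needs the elementary remark that clearing denominators turns a polynomial relation over $\mathbb{Q}(\mathbf{x})$ into one over $\mathbb{Z}[\mathbf{x}]$, and that $a(\mathbf{x}), b(\mathbf{x}) \in \mathbb{Z}[[\mathbf{x}]]$ guarantees the relevant series manipulations (the formal power series $\mathbb{Z}[[\mathbf{x}]]$ embeds in a suitable field of formal series, so "algebraic over $\mathbb{Q}(\mathbf{x})$" is meaningful and the ring structure is inherited). I would present the field-theoretic argument as the main line and mention the resultant construction only as the effective counterpart.
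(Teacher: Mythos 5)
Your proof is correct, but it takes a genuinely different route from the paper's. For the first claim (rational $\Rightarrow$ algebraic) the two proofs coincide: take the witnessing polynomial $Q(\mathbf{x})z - P(\mathbf{x})$. For closure under addition and multiplication, the paper gives an explicit linear-algebra construction: it forms the companion matrices $A$ and $B$ of the polynomials witnessing the algebraicity of $\alpha$ and $\beta$, observes that $\alpha\beta$ and $\alpha+\beta$ are eigenvalues of $A\otimes B$ and $(A\otimes I_B)+(I_A\otimes B)$ respectively (with eigenvector $v_\alpha\otimes v_\beta$), and takes the characteristic polynomials of those tensor products as the new witnesses. You instead present the resultant construction as the effective device and, recognising the genuine subtlety that $\mathrm{Res}_y$ can vanish identically, fall back on the field-theoretic argument: embed $\mathbb{Z}[[\mathbf{x}]]$ into its field of fractions, note that the elements of that field algebraic over $\mathbb{Q}(\mathbf{x})$ form a subfield, hence $a+b$ and $ab$ are algebraic over $\mathbb{Q}(\mathbf{x})$, and clear denominators to land back in $\mathbb{Z}[\mathbf{x},z]$. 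These are really two faces of the same idea: the characteristic polynomial of the tensor-product matrix is (up to a factor) the resultant you write down, and the field-theoretic statement is the abstract fact that either construction makes effective. The paper's version is self-contained and constructive, giving an explicit nonzero witness without needing to worry about degenerate resultants; your version is shorter and conceptually cleaner, but leans on the nontrivial (if standard) fact that algebraic elements form a ring and on the choice of an ambient field into which $\mathbb{Z}[[\mathbf{x}]]$ embeds. Both are complete proofs.
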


\begin{proof}
	Suppose that $f(\mathbf{x}) = a(\mathbf{x})/b(\mathbf{x})$ with $a(\mathbf{x}),b(\mathbf{x})\in \mathbb{Z}[\mathbf{x}]$, then $f(\mathbf{x})$ is algebraic since $P(\mathbf{x},f(\mathbf{x})) = 0$ where $P(\mathbf{x},z) = a(\mathbf{x}) - b(\mathbf{x}) z$.
	
	Now suppose that $\alpha(\mathbf{x}),\beta(\mathbf{x}) \in \mathbb{Z}[[\mathbf{x}]]$ are both algebraic, then there are square matrices $A = (a_{i,j})$ and $B=(b_{i,j})$, where each $a_{i,j},b_{i,j} \in\mathbb{Z}[[\mathbf{x}]]$, such that $\alpha(\mathbf{x})$ and $\beta(\mathbf{x})$ are eigenvalues of $A$ and $B$ with eigenvectors $v_\alpha$ and $v_\beta$, respectively.
	In particular, $A$ and $B$ are the companion matrices of the polynomials which witness  $\alpha(\mathbf{x})$ and $\beta(\mathbf{x})$ being algebraic.
	We then see that $\alpha(\mathbf{x})\cdot\beta(\mathbf{x})$ and $\alpha(\mathbf{x})+\beta(\mathbf{x})$ are eigenvalues of the matrices
	$A\otimes B$
	and
	$(A\otimes I_B) + (I_A \otimes B)$, respectively, both with eigenvector $v_\alpha \otimes v_\beta$
	where $\otimes$ is the tensor product, and $I_A$ and $I_B$ are the identity matrices with the same dimensions as $A$ and $B$, respectively.
	We then see that $\alpha(\mathbf{x})\cdot\beta(\mathbf{x})$ and $\alpha(\mathbf{x})+\beta(\mathbf{x})$ are algebraic as witnessed by the polynomials
	\begin{align*}
		P(\mathbf{x},z)&=\det\left(A\otimes B - z\, I_{A\otimes B}\right)
		\quad\text{and }\\
		Q(\mathbf{x},z)&=\det\left((A\otimes I_B) + (I_A \otimes B) - z\, I_{(A\otimes I_B) + (I_A \otimes B)}\right),
	\end{align*}respectively.
\end{proof}

Denef and Lipshitz~\cite{Denef1987} proved a following lemma for all algebraic series whose coefficients belong to an \textit{excellent local integral domain} of which any field is an example.
That is, the following is one case of Theorem 6.2 in \cite{Denef1987}.

\begin{lemma}[Theorem 6.2 in \cite{Denef1987}]\label{lem:algebraic-diag-rat}
	Let $a(\mathbf{x}) \in  \mathbb{Z}[\mathbf{x}]]$ be an algebraic series.
	Then, there is a rational series $r(\mathbf{x},\mathbf{y})\in \mathbb{Q}[[\mathbf{x},\mathbf{y}]]$ such that 
	\[
		a(\mathbf{x}) = I_{x_1,y_1}I_{x_2,y_2}\cdots I_{x_p,y_p} \left(r(\mathbf{x},\mathbf{y})\right)
	\]
	where $\mathbf{x} = (x_1,x_2,\ldots,x_p)$ and $\mathbf{y} = (y_1,y_2,\ldots,y_p)$.
\end{lemma}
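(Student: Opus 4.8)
The plan is to deduce this from the one‑variable case — Furstenberg's theorem, that an algebraic power series in one variable is the diagonal of a bivariate rational function — and then pass to $p$ variables by induction on $p$. First I would reduce to a normal form. Since each operator $I_{x_i,y_i}$ is $\mathbb{Q}$-linear and every monomial $x_1^{k_1}\cdots x_p^{k_p}$ equals $I_{x_1,y_1}\cdots I_{x_p,y_p}\bigl(x_1^{k_1}y_1^{k_1}\cdots x_p^{k_p}y_p^{k_p}\bigr)$, every polynomial in $\mathbf{x}$ is already of the required form, so subtracting the constant term $a(\mathbf{0})$ lets me assume $a(\mathbf{0})=0$. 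Taking $P(\mathbf{x},z)$ to be the minimal polynomial of $a$ over $\mathbb{Q}(\mathbf{x})$ (cleared of denominators), $P$ is separable in $z$, so $a$ is a simple root of $P(\mathbf{x},\cdot)$; after a further normalisation — Weierstrass preparation to isolate the branch $a$, followed by a suitable substitution — I would arrange that $a$ is moreover the \emph{unique} root of $P(\mathbf{x},\cdot)$ vanishing at the origin, equivalently that $\partial_z P(\mathbf{0},0)\neq 0$.

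For the base case $p=1$, write $P(x_1,z)=\sum_i c_i(x_1)z^i$, so that $c_0(0)=P(0,0)=0$ and $c_1(0)=\partial_z P(0,0)\neq 0$; then $P(x_1y_1,y_1)=y_1\,H(x_1,y_1)$ with $H(0,0)=c_1(0)\neq 0$, and hence
\[
  r(x_1,y_1):=\frac{y_1^2\,\partial_z P(x_1y_1,y_1)}{P(x_1y_1,y_1)}=\frac{y_1\,\partial_z P(x_1y_1,y_1)}{H(x_1,y_1)}
\]
is a genuine element of $\mathbb{Q}[[x_1,y_1]]$ and is visibly rational. By the argument principle, for $x_1$ fixed and small $a(x_1)=\frac{1}{2\pi i}\oint_{|z|=\epsilon}\frac{z\,\partial_z P(x_1,z)}{P(x_1,z)}\,dz$, the contour enclosing only the small simple root $a(x_1)$; reading off the $z^{-1}$-coefficient of the integrand and performing the substitution $x_1\mapsto x_1y_1$, $z\mapsto y_1$ then converts this into the identity $[x_1^n y_1^n]\,r(x_1,y_1)=[x_1^n]\,a(x_1)$ for every $n$, that is, $I_{x_1,y_1}r=a$. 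I would cite this residue computation rather than reproduce it.

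For the inductive step I would regard $a$ as an algebraic power series in $x_p$ over the ring $\mathbb{Q}[[x_1,\ldots,x_{p-1}]]$ and run the same construction in the pair $(x_p,z)$: this expresses $a$ as $I_{x_p,y_p}$ applied to a rational function of $x_1,\ldots,x_p,y_p$ whose coefficients, however, need not remain in $\mathbb{Q}[[x_1,\ldots,x_{p-1}]]$ — which is precisely what the remaining $p-1$ pairs of variables are spent on. I would repair this by re-expanding the offending denominators, applying the induction hypothesis to the resulting algebraic series in the remaining $p-1$ variables, and then reassembling the pieces, using that primitive diagonals acting on disjoint pairs of variables commute and that the class of rational series is closed under the operations performed.

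The hard part is exactly this normalisation-and-bookkeeping. The univariate engine requires the defining polynomial to be in Weierstrass-type position with a simple root at the origin, and the delicate point is to make this normalisation uniform across all $p$ variables while ensuring that every intermediate object stays an honest power series, so that each $I_{x_i,y_i}$ is defined and the diagonals can be composed. This is the content of Theorem~6.2 of \cite{Denef1987}, which we simply invoke for the case of field coefficients.
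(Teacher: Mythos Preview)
The paper does not prove this lemma at all: it is stated purely as a citation of Theorem~6.2 in \cite{Denef1987}, with the surrounding text noting only that the result of Denef and Lipshitz applies over any excellent local integral domain, in particular over $\mathbb{Q}$. Your proposal goes well beyond this by sketching the Furstenberg residue argument in the univariate case and outlining an induction on $p$, but in the end you too invoke Theorem~6.2 of \cite{Denef1987} for the delicate multivariable bookkeeping. So there is no real comparison of approaches to make here: the paper has no proof, and your sketch, while informative, ultimately rests on the same citation.

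If your intent was to supply an independent proof rather than a citation, the gap is precisely where you flag it: the inductive step. Running the univariate Furstenberg construction over the base ring $\mathbb{Q}[[x_1,\ldots,x_{p-1}]]$ produces a rational function in $x_p,y_p$ whose coefficients are algebraic (not rational) power series in the remaining variables, and ``re-expanding the offending denominators'' and ``reassembling the pieces'' does not by itself yield a single rational function in all $2p$ variables whose iterated primitive diagonals recover $a$. Making this work is exactly the content of the Denef--Lipshitz argument, so without a genuinely new idea here your proposal is a citation with commentary rather than a self-contained proof.
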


From the above lemma, we immediately show the following corollary.

\begin{corollary}\label{cor:alg-diag-to-rat-diag}
	Suppose that $f(z) \in \mathbb{Z}[[z]]$ is the diagonal of an algebraic power series $a(\mathbf{x}) \in \mathbb{Z}[[\mathbf{x}]]$ in $p$ variables $\mathbf{x} = (x_1,x_2,\ldots,x_p)$, that is,
	\[
		f(z) = \mathrm{Diag}(a)(z).
	\]
	Then, there is a rational series $r(\mathbf{x},\mathbf{y}) \in \mathbb{Q}[[\mathbf{x},\mathbf{y}]]$ such that
	\[
		f(z) = \mathrm{Diag}
		\left(r(\mathbf{x},\mathbf{y})\right)
		(z)
	\]
	where $\mathbf{x} = (x_1,x_2,\ldots,x_p)$ and $\mathbf{y} = (y_1,y_2,\ldots,y_p)$.
\end{corollary}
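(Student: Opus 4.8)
The plan is to obtain this as an essentially immediate consequence of \cref{lem:algebraic-diag-rat}, the only real work being the bookkeeping of coefficient arrays. First I would apply \cref{lem:algebraic-diag-rat} to the algebraic series $a(\mathbf{x})$ to obtain a rational series $r(\mathbf{x},\mathbf{y}) \in \mathbb{Q}[[\mathbf{x},\mathbf{y}]]$ in the $2p$ variables $\mathbf{x} = (x_1,\ldots,x_p)$ and $\mathbf{y} = (y_1,\ldots,y_p)$ with
\[
	a(\mathbf{x}) = \bigl(I_{x_1,y_1} I_{x_2,y_2}\cdots I_{x_p,y_p}\, r\bigr)(\mathbf{x}).
\]
Writing $b(k_1,\ldots,k_p,l_1,\ldots,l_p)$ for the coefficient array of $r$, this identity unwinds to $a(k_1,\ldots,k_p) = b(k_1,\ldots,k_p,k_1,\ldots,k_p)$, and I would record this explicitly since it is what makes the rest of the argument run.

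Next I would substitute the above expression for $a$ into the definition of the complete diagonal, namely $\mathrm{Diag}(a)(z) = \bigl(I_{x_1,x_2} I_{x_1,x_3}\cdots I_{x_1,x_p}\, a\bigr)(z)$, so as to express $f(z)$ as a composition of $2p-1$ primitive diagonals applied to the rational series $r$. Reading this composition from the innermost operator outward — so that each $I_{x_i,x_j}$ acts only while the variable $x_j$ is still present, keeping everything well-defined — its net effect on the array $b$ is to retain precisely the terms in which all $2p$ exponents agree: the operators $I_{x_i,y_i}$ impose $k_i = l_i$ for each $i$, the operators $I_{x_1,x_j}$ impose $k_1 = k_j$ for $j = 2,\ldots,p$, and together these $2p-1$ identifications connect all $2p$ indices. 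Hence the resulting univariate series has $z^k$-coefficient equal to $b(k,k,\ldots,k)$, which is exactly $\mathrm{Diag}(r)(z)$ after relabelling the variables of $r$ in the sense of \cref{def:full-diag}. Since $r$ is rational, this is the desired conclusion.

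The one point I would treat carefully — the expected obstacle, modest as it is — is reconciling the above with the definition of $\mathrm{Diag}$ in \cref{def:full-diag}, which fixes the particular chain $I_{x_1,x_2} I_{x_1,x_3}\cdots$, whereas the chain produced here pairs $x_i$ with $y_i$ instead of pairing everything with a single variable. I would handle this by observing that primitive diagonals whose identified exponents can be consistently merged commute with one another, so the order of application is immaterial provided each step remains well-defined; equivalently, any chain of primitive diagonals whose underlying graph of identifications on the variable set is connected extracts exactly the fully-diagonal coefficients, and the graph arising here is connected. No ingredient beyond \cref{lem:algebraic-diag-rat} is required.
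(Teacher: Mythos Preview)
Your proposal is correct and follows essentially the same route as the paper: apply \cref{lem:algebraic-diag-rat} to write $a(\mathbf{x})$ as a chain of primitive diagonals of a rational $r(\mathbf{x},\mathbf{y})$, then observe that composing this with $\mathrm{Diag}(a)$ yields $\mathrm{Diag}(r)$. The paper's proof is in fact terser than yours --- it simply asserts the final equality $\mathrm{Diag}(a)(z) = \mathrm{Diag}(r)(z)$ without the coefficient-level bookkeeping or the discussion of commuting primitive diagonals that you (rightly) flag as the only point needing care.
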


\begin{proof}
	Suppose that $f(z)$ can be written as the diagonal of an algebraic series $a(\mathbf{x}) \in \mathbb{Z}[[\mathbf{x}]]$, that is,
	$
		f(z)
		=
		\mathrm{Diag}(a)(z)
	$.
	From \cref{lem:algebraic-diag-rat}, there is a rational series $r(\mathbf{x},\mathbf{y})\in \mathbb{Q}[[\mathbf{x},\mathbf{y}]]$ such that 
	\[
	a(\mathbf{x}) = I_{x_1,y_1}I_{x_2,y_2}\cdots I_{x_p,y_p} \left(r(\mathbf{x},\mathbf{y})\right)
	\]
	From this, we see that
	\[
		f(z)
		=
		\mathrm{Diag}(a)(z)
		=
		\mathrm{Diag}
		\left(r(\mathbf{x},\mathbf{y})\right)
		(z)
	\]
	as desired.
\end{proof}

\section{Context-free languages}\label{sec:formal-languages}

In this section, we recall the definition of \emph{context-free} languages and their connections with algebraic generating functions and virtually free groups.
We begin by defining \emph{context-free grammars} as follows.
We will use this definition in the proof of \cref{lem:gfun}.

A \emph{context-free grammar} is a tuple $\mathcal{G} = (\Sigma, \Gamma, S, \Delta)$ where
\begin{itemize}
	\item $\Sigma$ is the alphabet of \emph{terminals};
	\item $\Gamma$ is an alphabet of \emph{nonterminals} which is disjoint from $\Sigma$;
	\item $S \in \Gamma$ is the \emph{starting symbol}; and
	\item $\Delta \in \Gamma \times (\Sigma\cup\Gamma)^*$ is a finite set of \emph{replacement rules}. 
\end{itemize}
The replacement rules $(A,u)\in \Delta$ allow us to replace instances of a nonterminal symbols $A \in \Gamma$ with words $u \in (\Sigma\cup\Gamma)^*$.
The context-free grammar $\mathcal{G}$ is said to \emph{produce} a word $w \in \Sigma^*$ if, starting with the word which contains only the letter $S$,  we may apply a sequence of replacement rules from $\Delta$ and obtain the word $w$. 
We write $L(\mathcal{G}) \subseteq \Sigma^*$ for the language of all words which are produced by the grammar $\mathcal{G}$.

\begin{example}\label{ex:grammar}
	Let $F_2 = \left\langle a,b \mid-\right\rangle$ be the rank-2 free group with generating set $\Sigma = \{a,a^{-1},b,b^{-1}\}$.
	Then, the language of words $w \in \Sigma^*$ which correspond to the trivial element is produced by the context-free grammar $\mathcal{G} = (\Sigma, \Gamma, S, \Delta)$ with $\Gamma = \{S\}$ and replacements
	\[
		\Delta = \left\{
			(S, \varepsilon),\,
			(S, aSa^{-1}S),\,
			(S, bSb^{-1}S),\,
			(S, a^{-1}SaS),\,
			(S, b^{-1}SbS)
		\right\}.
	\]
	Thus, the language of all such words is context-free.
\end{example}

A \emph{derivation tree} is a rooted tree with ordered children which completely describes how a context-free grammar produces a particular word.
In particular, each node in the tree has children that are determined by a replacement rule.
For example, the word $w = aba^{-1}ab^{-1}a^{-1}bb^{-1}$ is produced by the grammar given in \cref{ex:grammar}, and a derivation tree of $w$ is given in \cref{fig:derivation-tree}.

\begin{figure}[hpt]
\centering
\begin{tikzpicture}[
		level distance=3em,
		level 1/.style={sibling distance=6em},
		level 2/.style={sibling distance=3em},
	]
	\node {$S$}
	child {node {$a$}}
	child {node {$S$}
		child{node {$b$}}
		child{node {$S$}
			child{node {$a^{-1}$}}
			child{node {$S$} child{node {$\varepsilon$}} }
			child{node {$a$}}
			child{node {$S$} child{node {$\varepsilon$}} }
		}
		child{node {$b^{-1}$}}
		child{node {$S$} child{node {$\varepsilon$}} }
	}
	child {node {$a^{-1}$}}
	child {node {$S$}
		child{node {$b$}}
		child{node {$S$} child{node {$\varepsilon$}} }
		child{node {$b^{-1}$}}
		child{node {$S$} child{node {$\varepsilon$}} }
	}
	;
\end{tikzpicture}
\caption{Derivation tree for $w = aba^{-1}ab^{-1}a^{-1}bb^{-1}$ in \cref{ex:grammar}.}\label{fig:derivation-tree}
\end{figure}
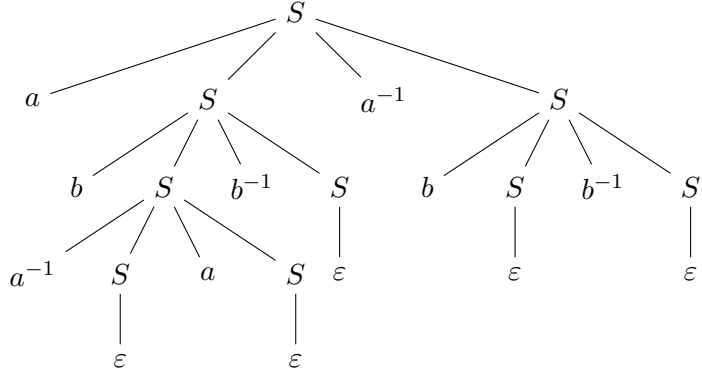

We then say that a context-free grammar $\mathcal{G}$ is \emph{unambiguous} if each word $w\in L(\mathcal{G})$ has exactly one associated derivation tree.
In this section, we will often refer to a subclass (cf.~\cref{lem:det-implies-unambiguous}) of unambiguous context-free languages known a \emph{deterministic} context-free.
For brevity, we do not define the class of deterministic context-free language in this paper.

\begin{lemma}[Theorem 13.15 in \cite{rich2008automata}]\label{lem:det-implies-unambiguous}
	Every deterministic context-free language is unambiguous context-free.
\end{lemma}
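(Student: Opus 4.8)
The plan is to prove the statement directly from the definition of a deterministic context-free language, namely a language accepted by a deterministic pushdown automaton (DPDA) with acceptance by final state, by constructing from such a DPDA an unambiguous context-free grammar that generates the same language.

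The first step is a normalisation of the DPDA. Given a DPDA $M$ with $L = L(M)$, I would convert $M$ into an equivalent \emph{continuing} DPDA $M'$: one whose transition behaviour is total in the sense that $M'$ never blocks and never runs through an infinite chain of $\varepsilon$-moves. This is the standard construction that adds a ``dead'' sink state, to which $M'$ moves whenever $M$ would have had no legal move, together with a bookkeeping device that detects $\varepsilon$-cycles (after finitely many $\varepsilon$-moves from a given surface configuration, either an input letter is read, the stack strictly shrinks, or a configuration repeats, and the repeated case is redirected to the dead state). The automaton $M'$ is still deterministic, still satisfies $L(M') = L$, and now has the key property that every input word $w$ induces a \emph{unique} maximal run, which consumes all of $w$, and $w \in L$ precisely when that run passes through an accepting configuration. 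If convenient I would additionally convert $M'$ into an equivalent DPDA accepting by empty stack together with final state, again preserving determinism and the ``unique run per input'' property, so that the grammar construction below applies in its cleanest form.

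The second step is the classical triple construction of a context-free grammar $G$ from $M'$: the nonterminals are symbols $[q, A, q']$, intended to generate exactly those input segments along which $M'$ can pass from state $q$ with $A$ atop the stack to state $q'$ having net-popped $A$, with productions read off from the transitions of $M'$ in the usual way, plus a start symbol with productions to the relevant triples. The standard analysis of this construction yields a bijection between the leftmost derivations of $G$ and the accepting computations of $M'$: a leftmost derivation of a terminal word $w$ traces out, step by step, an accepting run of $M'$ on $w$, and conversely each accepting run determines a unique leftmost derivation. Since $M'$ is deterministic, each $w \in L$ admits exactly one accepting computation, hence exactly one leftmost derivation in $G$, hence (leftmost derivations and derivation trees being in one-to-one correspondence) exactly one derivation tree. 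Therefore $G$ is unambiguous and $L(G) = L$, so $L$ is an unambiguous context-free language.

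I expect the main obstacle to be the first step: turning an arbitrary DPDA into a continuing one while simultaneously keeping it deterministic, preserving the accepted language, and guaranteeing that every input has a unique complete run — in particular the careful treatment of $\varepsilon$-move loops and of configurations from which the original machine would block. Once $M'$ has the ``exactly one run per input'' property, the remainder is the routine triple construction together with the observation that determinism of the run forces uniqueness of the derivation tree.
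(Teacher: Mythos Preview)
Your proposal is a correct and standard argument, but note that the paper itself does not give a proof of this lemma at all: it is simply quoted as Theorem~13.15 of \cite{rich2008automata} and used as a black box. So there is no ``paper's own proof'' to compare against.

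That said, your outline is precisely the classical textbook argument (and presumably close to what one would find in the cited reference): normalise the DPDA so that it is continuing and never loops on $\varepsilon$-moves, apply the triple construction to obtain a context-free grammar, and observe that the bijection between leftmost derivations and accepting computations, combined with the uniqueness of the accepting computation in a deterministic machine, forces the grammar to be unambiguous. Your identification of the $\varepsilon$-loop elimination as the only genuinely delicate point is accurate; once that normalisation is carried out the rest is routine. There is nothing missing from your plan.
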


We then have the following well-known characterisation for the generating function of unambiguous (and thus deterministic) context-free languages.
This result was proven for noncommutative generating functions by Chomsky and Sch\"utzenberger in \cite{Chomsky1963}.
We provide a short proof for the multivariate (commutative) generating function.

\begin{lemma}\label{lem:gfun}
	Let $L \subseteq \Sigma^*$ be an unambiguous context-free language over the alphabet $\Sigma = \{a_1,a_2,\ldots,a_{n}\}$ with generating function
	\[
		f(x_1,\ldots,x_{n}, z)
		=
		\sum_{k_1,\ldots,k_{n}\in \mathbb{N}}
		c(k_1,\ldots,k_{n})\,
		x_1^{k_1}
		\cdots
		x_{n}^{k_{n}}
		z^{k_1 + \cdots +k_{n}}
	\]
	where each $c(k_1,k_2,\ldots,k_{n})$ counts the words in $L$ that, for each $i$, contain exactly $k_i$ instances of the letter $a_i$.
	Then, $f(x_1,\ldots,x_{n}, z)$ is algebraic.
\end{lemma}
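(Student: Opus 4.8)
The plan is to put the grammar into Chomsky normal form and then set up a system of polynomial equations for the multivariate generating functions attached to the nonterminals, exploiting unambiguity to ensure that the combinatorial decomposition of derivation trees translates into an exact (rather than merely coefficient-wise dominating) identity of power series. First I would invoke the standard fact that every context-free language is generated by a grammar in Chomsky normal form, and moreover that this transformation can be carried out preserving unambiguity (each word retains a unique derivation tree). So without loss of generality we may assume $\mathcal{G} = (\Sigma,\Gamma,S,\Delta)$ is unambiguous and every rule is of the form $A \to BC$ with $B,C \in \Gamma$, or $A \to a$ with $a \in \Sigma$, with the possible exception of a rule $S \to \varepsilon$ (used only if $\varepsilon \in L$, and then $S$ does not appear on any right-hand side).

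Next, for each nonterminal $A \in \Gamma$ let $L_A \subseteq \Sigma^*$ be the language of words derivable from $A$, and let $F_A(\mathbf{x},z)$ be its multivariate generating function, counting each word $w$ with the monomial $x_1^{k_1}\cdots x_n^{k_n} z^{|w|}$ where $k_i$ is the number of occurrences of $a_i$ in $w$. The key step is the derivation-tree decomposition: a derivation tree rooted at $A$ is either a single leaf $a$ (when $A \to a \in \Delta$), or has a root with two children which are derivation trees rooted at $B$ and $C$ for some rule $A \to BC \in \Delta$, with the word of the whole tree being the concatenation of the two subwords. Because the grammar is unambiguous, the map sending a derivation tree rooted at $A$ to its pair (rule at the root, tuple of child subtrees) is a bijection onto the disjoint union over applicable rules, and the word-to-monomial assignment is multiplicative under concatenation. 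This yields, for each $A \in \Gamma$, the polynomial equation
\[
	F_A(\mathbf{x},z)
	=
	\sum_{(A,a)\in\Delta,\ a\in\Sigma} x_{\iota(a)} z
	\;+\;
	\sum_{(A,BC)\in\Delta} F_B(\mathbf{x},z)\,F_C(\mathbf{x},z),
\]
where $\iota(a)$ is the index with $a = a_{\iota(a)}$, plus the constant term $1$ added to $F_S$ if $S\to\varepsilon\in\Delta$. (One should note the bookkeeping that $f = F_S$.) This is a finite system of polynomial equations over $\mathbb{Z}[\mathbf{x},z]$ in the unknowns $(F_A)_{A\in\Gamma}$.

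Finally I would eliminate all the auxiliary unknowns $F_A$ for $A \ne S$ to conclude that $F_S = f$ itself satisfies a single nontrivial polynomial equation $P(\mathbf{x},z,f(\mathbf{x},z)) = 0$, i.e.\ is algebraic. Concretely this can be done by resultants: iteratively take the resultant of two equations with respect to one of the unknown series, reducing the number of unknowns by one at each stage while staying inside the polynomial ring $\mathbb{Z}[\mathbf{x},z,\ldots]$; after $|\Gamma|-1$ eliminations one is left with a polynomial relation involving only $\mathbf{x}$, $z$, and $F_S$. One must check nontriviality — that the elimination does not collapse to $0=0$ — which follows because the system, viewed over the fraction field, has $f$ as a genuine algebraic function of $(\mathbf{x},z)$ (the $F_A$ are power series with no common ``identity'' among indeterminates). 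The main obstacle is exactly this nontriviality/nondegeneracy point: one needs to argue that the elimination yields a polynomial that genuinely annihilates $f$ and is not identically zero. A clean way around it is to avoid resultants and instead argue abstractly: the system exhibits each $F_A$ as an element of a finitely generated $\mathbb{Z}[\mathbf{x},z]$-subalgebra of $\mathbb{Z}[[\mathbf{x},z]]$ that is integral over $\mathbb{Z}[\mathbf{x},z]$ (or work in the algebraic closure of the fraction field and note $F_S$ lies in a finite extension), and integral elements of a domain are precisely the roots of monic — hence nontrivial — polynomials; combined with \cref{lem:alg-closure}, which gives closure of algebraic series under the ring operations appearing in the system, this shows $F_S$ is algebraic. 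Either way the conclusion is that $f(x_1,\ldots,x_n,z)$ is algebraic, as claimed.
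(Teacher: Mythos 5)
Your route is essentially the paper's: extract from the unambiguous grammar a polynomial system in the per-nonterminal series $F_A$, using unambiguity to turn the derivation-tree decomposition into an exact power-series identity, then eliminate to obtain a single polynomial equation for $f=F_S$. The Chomsky-normal-form preprocessing (and your claim that it can be done preserving unambiguity, which is correct) is an unnecessary detour: the paper sets up the system directly from an arbitrary unambiguous grammar, a rule $A\to w$ with $w\in(\Sigma\cup\Gamma)^*$ contributing the term $\prod_i (x_i z)^{|w|_{a_i}}\prod_{B\in\Gamma} F_B^{|w|_B}$, and your binary-rule version is just a special case of this.

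The genuine gap is in how you justify the elimination. You rightly identify that one must rule out the resultants collapsing to $0=0$, but neither justification you offer works. The claim that nondegeneracy ``follows because the system, viewed over the fraction field, has $f$ as a genuine algebraic function of $(\mathbf{x},z)$'' is circular — that $f$ is algebraic is the conclusion, not an available hypothesis. Your ``clean way'' is also unsupported: the finitely generated $\mathbb{Z}[\mathbf{x},z]$-subalgebra generated by the $F_A$ inside $\mathbb{Z}[[\mathbf{x},z]]$ is not automatically integral over $\mathbb{Z}[\mathbf{x},z]$, since the relations $F_A=P_A\bigl((F_B)_B\bigr)$ need not be monic in any $F_A$ and a finitely generated subalgebra of a domain need not be integral over its base; the parenthetical fallback (``$F_S$ lies in a finite extension'') again presupposes the conclusion, and \cref{lem:alg-closure} cannot help because it takes algebraicity of its inputs as a hypothesis. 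What actually closes this is the theory of \emph{proper} (well-founded) algebraic systems: the system determines the $F_A$ uniquely degree by degree, and one proves — by an induction on the number of unknowns with a careful choice of resultants — that a nonzero annihilating polynomial survives. This is precisely what the paper invokes by citing Appendix B.1 of Flajolet and Sedgewick; you should either carry out that argument or cite it, rather than rely on the integrality shortcut.
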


\begin{proof}
Let $\mathcal{G} = (\Sigma, \Gamma, S, \Delta)$ be an unambiguous context-free grammar such that $L = L(\mathcal{G})$.
For each nonterminal $A \in \Gamma$, we write $F_A(\mathbf{x},z)$ for the generating function which counts the words in $\Sigma^*$ that can be produced from the nonterminal $A$ using  a sequence replacement rules in $\Delta$.

We then see that for each $A \in \Gamma$, we have the recurrence formula
\[
	F_A(\mathbf{x},z)
	=
	\sum_{(A, w) \in \Delta}
		\left(
			\prod_{i=1}^n
				(x_iz)^{|w|_{a_i}}
		\right)
		\left(
			\prod_{B \in \Gamma}
				F_B(\mathbf{x},z)^{|w|_B}
		\right)
\]
where each $|w|_{a_i}$ and $|w|_B$ counts the number of instances of the letters $a_i \in \Sigma$ and $B \in \Gamma$ in the word $w \in (\Sigma\cup\Gamma)^*$.
Notice that each term in the above sum corresponds to a possible choice of children of a node labelled with the non-terminal letter $A$ in a derivation tree.

Thus, we have a system of $|\Gamma|$ polynomial equations in the variables $(F_{A}(\mathbf{x},z))_{A\in \Gamma}$ with coefficients in $\mathbb{Z}[\mathbf{x},z]$.
It is then known that such a system of equation has algebraic solutions.
In particular, we may perform a sequence algebraic eliminations (see~Appendix B.1 in \cite{flajolet2009}) to find a polynomial equation that witnesses $f(\mathbf{x},z)=F_S(\mathbf{x},z)$ being algebraic.
\end{proof}

It is a well-known result that the class of deterministic context-free languages classify the \textit{word problem} of virtually free groups as in the following lemma.
This result is used in the proof of \cref{lem:det-cf}.

\begin{lemma}[Theorem in \cite{muller1983groups} and Corollary 2.10 in \cite{muller1985theory}]\label{lem:dcf-iff-vfree}
	Let $G$ be a group with finite symmetric generating set $S$, then the word problem
	\[
		\mathrm{WP}_S
		\coloneqq
		\{
			w\in S^*
		\mid
			\overline{w}=1
		\}
	\]
	is deterministic context-free if and only if $G$ is virtually free.
\end{lemma}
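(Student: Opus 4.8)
The plan is to establish the two implications separately. The converse (``virtually free $\Rightarrow$ $\mathrm{WP}_S$ deterministic context-free'') is an explicit automaton construction, while the forward implication is the Muller--Schupp theorem, whose proof I would run through the geometry of the Cayley graph together with Stallings' ends theorem and Dunwoody's accessibility theorem.

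\emph{Virtually free $\Rightarrow$ deterministic context-free.} I would first note that the property does not depend on the chosen finite symmetric generating set: if $S'$ is another one, then $\mathrm{WP}_{S'}$ is the preimage of $\mathrm{WP}_S$ under the monoid homomorphism $(S')^{*}\to S^{*}$ sending each generator to a fixed representative word, and the class of deterministic context-free languages is closed under inverse homomorphism. Next, a free group with basis $Y$ has deterministic context-free word problem: a deterministic pushdown automaton reads the input left to right while maintaining on its stack the freely reduced form of the prefix read so far (on each new letter it either pushes it or cancels it against the stack top), and it accepts exactly when the stack is empty at the end of the input. For a general $G$ containing a free subgroup of finite index, replace that subgroup by its normal core, which is still free (Nielsen--Schreier) and of finite index, so we may assume $F \trianglelefteq G$ with a finite transversal $T$. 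Writing every $g \in G$ uniquely as $f\cdot t$ with $f \in F$, $t \in T$, right multiplication by a generator $s \in S$ sends $(f,t)$ to $(f\cdot f_{t,s},\, t_{t,s})$, where $f_{t,s}\in F$ and $t_{t,s}\in T$ depend only on the pair $(t,s)$ and each $f_{t,s}$ is a fixed reduced word over $Y$. A deterministic pushdown automaton can therefore keep $t$ in its finite control, keep the reduced form of $f$ on its stack, update the stack at each step by the bounded predetermined word $f_{t,s}$, and accept iff at the end $t = 1$ and the stack is empty; hence $\mathrm{WP}_S$ is deterministic context-free.

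\emph{Context-free $\Rightarrow$ virtually free.} (Determinism is not needed here.) I would follow Muller and Schupp. The first step is to translate the hypothesis into geometry: from a pushdown automaton (or grammar) accepting $\mathrm{WP}_S$ one extracts that $\Gamma(G,S)$ is a \emph{context-free graph}, meaning there is a constant $k$ and only finitely many isomorphism types of pairs $(C,\partial C)$ where $C$ ranges over the connected components obtained by deleting a metric ball about $1$ and $\partial C$ is its frontier, of size at most $k$; in particular $G$ is finitely presented. If $G$ is infinite it cannot then be one-ended, since otherwise the frontiers of the infinite complementary components would be forced to grow without bound; so by Stallings' ends theorem $G$ splits as $A *_C B$ or as an HNN extension over a finite subgroup $C$. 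The context-free-graph property passes to the vertex groups of such a splitting (they are finitely generated and quasi-isometrically embedded, so their Cayley graphs inherit finitely many cone types with bounded frontier). Applying Dunwoody's accessibility theorem for finitely presented groups, this splitting process terminates, exhibiting $G$ as the fundamental group of a finite graph of groups with finite edge groups whose vertex groups admit no further splitting over a finite subgroup; being still context-free, by the dichotomy just used they must be finite. Finally, the fundamental group of a finite graph of finite groups acts on its Bass--Serre tree with finite vertex stabilizers and finite quotient graph, hence contains a free subgroup of finite index (Karrass--Pietrowski--Solitar); thus $G$ is virtually free.

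I expect the main obstacle to be the first step of the forward direction --- converting the stack discipline of a pushdown automaton accepting $\mathrm{WP}_S$ into the ``finitely many cone types with uniformly bounded frontier'' property of the Cayley graph --- together with the appeal to Dunwoody's accessibility theorem, which is a deep external input (and the reason Muller and Schupp's original argument was at first conditional). Note that once $G$ is known to be virtually free, the word problem is automatically \emph{deterministic} context-free by the converse direction, so the two formulations coincide and the word ``deterministic'' in the statement costs nothing.
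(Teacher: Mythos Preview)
The paper does not prove this lemma at all: it is stated as a citation to Muller--Schupp (the header reads ``Theorem in \cite{muller1983groups} and Corollary 2.10 in \cite{muller1985theory}'') and is used as a black box in the proof of \cref{lem:det-cf}. So there is nothing to compare your argument against on the paper's side.

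That said, your outline is a faithful sketch of how the Muller--Schupp theorem is actually established in the literature. The converse direction via the coset-tracking deterministic PDA is standard and correct, as is your observation that determinism in the statement is free once both directions are in hand. For the forward direction you correctly identify the three main ingredients (bounded frontier / finitely many end-types from the pushdown hypothesis, Stallings' ends theorem, Dunwoody's accessibility), and you are right that the delicate step is extracting the geometric ``context-free graph'' condition from the automaton and that accessibility is the heavy external input. If you were writing this up in full you would need to be more careful about why the context-free-graph property passes to vertex groups of the splitting, but as a plan it is sound. For the purposes of this paper, however, a citation suffices.
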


We have the following closure properties for the family of deterministic context-free languages which we use in the proof of \cref{lem:det-cf}.

\begin{lemma}[Lemma 3 on p.~128 of \cite{hoogeboom2004}]\label{lem:intersecion-dcf-regular}
	The family of deterministic context-free languages is closed under intersection with regular language.
	Moreover, if $p \colon \Sigma^* \to X^*$ is a monoid homomorphism, and $L \subseteq X^*$ is deterministic context-free, then the language $p^{-1}(L) \subseteq \Sigma^*$ is also deterministic context-free.
\end{lemma}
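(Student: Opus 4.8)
The plan is to work with the standard characterisation of deterministic context-free languages as exactly those accepted by deterministic pushdown automata (DPDA); I take this as the working definition, since the paper deliberately leaves the class undefined. Both claims then reduce to explicit automaton constructions that visibly preserve determinism.

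For the first claim, let $M=(Q_M,\Sigma,\Gamma,\delta_M,q_0,Z_0,F_M)$ be a DPDA recognising $L$ and let $A=(Q_A,\Sigma,\delta_A,p_0,F_A)$ be a DFA recognising the regular language $R\subseteq\Sigma^*$. I would form the product automaton with state set $Q_M\times Q_A$, inheriting the stack alphabet and all stack operations from $M$: on an input letter $a$ the $M$-component moves as $\delta_M$ prescribes and the $A$-component moves by $\delta_A(\cdot,a)$, while on an $\varepsilon$-move of $M$ the $M$-component moves and the $A$-component is left unchanged. At every configuration the available move is the unique one inherited from $M$, so determinism is preserved; declaring $F_M\times F_A$ accepting gives a DPDA for $L\cap R$.

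For the inverse-homomorphism claim, fix a DPDA $M$ for $L\subseteq X^*$ and a monoid homomorphism $p\colon\Sigma^*\to X^*$, and set $N=\max_{a\in\Sigma}|p(a)|_X$. I would build a DPDA $M'$ over $\Sigma$ that, on reading a letter $a\in\Sigma$, loads the bounded-length word $p(a)$ into a buffer stored in its finite control and then simulates $M$ on the buffered letters one at a time, interleaving via $\varepsilon$-moves of $M'$ any $\varepsilon$-moves that $M$ performs, until the buffer is exhausted and $M'$ is ready to consume the next letter of $\Sigma$. Because the buffer is always a suffix of some $p(a)$, only finitely many extra control states are needed, so $M'$ is a genuine pushdown automaton; and it is deterministic because at each step the move it makes is the unique move dictated by $M$. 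By construction $M'$ accepts $w\in\Sigma^*$ iff $M$ accepts $p(w)$, i.e.\ iff $w\in p^{-1}(L)$.

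The main obstacle is the bookkeeping around $\varepsilon$-moves. A DPDA may perform an unbounded run of $\varepsilon$-moves, so for the simulation to be well-defined one first normalises $M$ to have no infinite $\varepsilon$-loops (equivalently, to always eventually consume input or halt), a standard preprocessing step; and in the inverse-homomorphism construction $M'$ must commit to the next $\Sigma$-letter only after $M$ has exhausted all $\varepsilon$-moves available on the current stack, and — if the DPDA model in use carries a right endmarker — must feed the endmarker through the buffer correctly. Once these points are handled, both constructions are routine, which is why the literature (e.g.\ \cite{hoogeboom2004}) states the lemma without a detailed argument.
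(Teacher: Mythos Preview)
Your argument is correct: the product construction with a DFA for closure under intersection with regular languages, and the finite-buffer simulation for closure under inverse monoid homomorphism, are exactly the standard proofs, and you have correctly flagged the only subtlety (normalising $\varepsilon$-moves so the simulation is well-defined). Note, however, that the paper does not give its own proof of this lemma at all---it is simply quoted from \cite{hoogeboom2004}---so there is nothing to compare your approach against; you have supplied the textbook argument that the cited reference would contain.
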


\section{Coset-labelled words}\label{sec:coset-labelled}

In this section, we describe a method of decorating words in $X^*$ with cosets representatives in $T$.
These decorated words are used in the construction of a deterministic context-free language in \cref{lem:det-cf} which is then used in the proof of \cref{thm:main}.

Let $w = x_{k_1} x_{k_2} \cdots x_{k_\ell} \in X^*$ be a word of length $\ell$, then for each prefix $x_{k_1} x_{k_2} \cdots x_{k_{p}}$ of $w$, we define $t_{c_{p+1}} = \rho(x_{k_1} x_{k_2} \cdots x_{k_{p}}) \in T$.
In particular, we see that $t_{c_1} = 1$ and $t_{c_{\ell+1}} = \rho({w})$.
Recall here that $\rho\colon G \to T$ is the map for which $g \in H \cdot \rho(g)$ for each $g\in G$.
Notice then that,
\[
	\overline{w}
	=
	(
		t_{c_1}
		x_{k_1}
		t_{c_2}^{-1}
	)
	(
		t_{c_2}
		x_{k_2}
		t_{c_3}^{-1}
	)
	\cdots
	(
		t_{c_\ell}
		x_{k_\ell}
		t_{c_{\ell+1}}^{-1}
	)
	\cdot t_{c_{\ell+1}}
\]
where each factor $t_{c_i} x_{k_i} t_{c_{i+1}}^{-1}$ corresponds to an element in the finite-index normal subgroup $H$.
We see that $t_{c_{i+1}} = \rho(t_{c_i} x_{k_i})$ for each $i$, and thus
\begin{multline}\label{eq:factor-word}
	\overline{w}
	=
	\left(
		t_{c_1}
		x_{k_1}
		(\rho(t_{c_1}x_{k_1}))^{-1}
	\right)
	\left(
		t_{c_2}
		x_{k_2}
		(\rho(t_{c_2}x_{k_2}))^{-1}
	\right)
	\\\cdots
	\left(
		t_{c_\ell}
		x_{k_\ell}
		(\rho(t_{c_\ell}x_{k_\ell}))^{-1}
	\right)
	\cdot
	\rho(t_{c_\ell}x_{k_\ell})
\end{multline}
where each factor $t_{c_i} x_{k_i} (\rho(t_{c_i}x_{k_i}))^{-1}$ corresponds to an element in $H$.
Notice also that if $\overline{w} \in H$, then $\rho(t_{c_\ell} x_{k_\ell}) = 1$, and thus \cref{eq:factor-word} gives us an expression for $\overline{w}$ factored into $\ell$ parts, each belonging to $H$.

For each generator $x_i \in X$, and each coset representative $t_j \in T$, we introduce a new symbol $\sigma_{i,j}$ which corresponds to the word $t_j x_{i} (\rho(t_{j}x_{i}))^{-1}$ as in (\ref{eq:factor-word}).
We then write $\Sigma$ for the finite set of all such symbols
\begin{equation}\label{eq:sigma}
	\Sigma
	=
	\{
		\sigma_{i,j}
	\mid
		x_i \in X,\  
		t_j \in T
	\}.
\end{equation}
Notice here that $\Sigma$ contains exactly $sd = |X| \cdot |T|$ symbols.
We then define a length-preserving map $\varphi\colon X^* \to \Sigma^*$ as follows.

\begin{definition}\label{def:coset-labelled}
	Let $\varphi \colon X^* \to \Sigma^*$ be the map defined such that for each word
	$
		w = x_{k_1} x_{k_2} \cdots x_{k_\ell} \in X^*,
	$
	we have
	$
		\varphi(w)
		=
		\sigma_{k_1,c_1}
		\sigma_{k_2,c_2}
		\cdots
		\sigma_{k_\ell,c_\ell}
		\in \Sigma^*
	$
	where each $c_i$ is as in \cref{eq:factor-word}.
	We call the words $\varphi(X^*)$ \emph{coset labelled}.
\end{definition}

From the above definition, it is clear that $\varphi\colon X^* \to \Sigma^*$ is injective as it has a left inverse $\mu\colon \Sigma^* \to X^*$ defined as follows.

\begin{definition}
	Let $\mu\colon \Sigma^* \to X^*$ be the monoid homomorphism defined such that $\mu(\sigma_{i,j}) = x_i$ for each $\sigma_{i,j} \in \Sigma$.
	We then see that $(\mu \circ \varphi)(w) = w$ for each $w \in X^*$ as the map $\varphi\colon X^*\to\Sigma^*$ only decorates to the letters of $w$ with cosets representatives from $T$, and $\mu\colon \Sigma^*\to X^*$ removes these decorations.
\end{definition}

For each regular language $R\in X^*$, we write $\mathcal{L}_R \subseteq \Sigma^*$ for the set of words in $\varphi(R)$ which correspond to an element in the trivial coset as follows.

\begin{lemma}\label{def:L_R}
	Let $R \subseteq X^*$ be the language defined as
	\[
		\mathcal{L}_R
		=
		\{
			\varphi(w)\in \Sigma^*
		\mid
			w \in R
			\ \mathrm{with}\ 
			\overline{w} \in H
		\}.
	\]
	Then, $\mathcal{L}_R$ is a regular language.
\end{lemma}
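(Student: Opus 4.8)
The plan is to exhibit $\mathcal{L}_R$ as obtained from known regular languages by the closure operations in \cref{lem:reg-lang-closure}. There are two conditions defining $\mathcal{L}_R$: the underlying word $w$ lies in $R$, and $\overline{w}\in H$ (equivalently, $\rho(\overline w)=1$, i.e. $w$ represents the trivial coset of $H$). First I would observe that the set of coset-labelled words $\varphi(X^*)\subseteq\Sigma^*$ is itself regular: build a deterministic automaton with state set $T$ (the coset representatives), start state $t_1=1$, all states accepting, and a transition $t_j\xrightarrow{\sigma_{i,j}}\rho(t_jx_i)$, while any letter $\sigma_{i,j'}$ read from a state $t_j$ with $j'\neq j$ leads to a dead (non-accepting) sink. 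By construction, a word $\sigma_{k_1,c_1}\cdots\sigma_{k_\ell,c_\ell}$ is accepted precisely when $c_1=1$ and $c_{i+1}=\rho(t_{c_i}x_{k_i})$ for all $i$, which is exactly the condition that it equals $\varphi(x_{k_1}\cdots x_{k_\ell})$. Thus $\varphi(X^*)$ is regular.

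Next I would encode the condition $\overline w\in H$ using the same automaton: declare only the single state $t_1=1$ to be accepting (keeping the dead sink non-accepting). Since the state reached after reading $\varphi(w)$ is $\rho(\overline w)$ by the prefix-tracking construction in \cref{def:coset-labelled}, this modified automaton accepts exactly those words of $\varphi(X^*)$ with $\rho(\overline w)=1$, i.e. $\overline w\in H$. Call this regular language $\mathcal{E}\subseteq\Sigma^*$. It remains to impose $w\in R$: the map $\mu\colon\Sigma^*\to X^*$ is a monoid homomorphism with $\mu(\varphi(w))=w$, so the words $u\in\varphi(X^*)$ with $\mu(u)\in R$ are exactly those in $\mu^{-1}(R)$. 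By \cref{lem:reg-lang-closure}, $\mu^{-1}(R)$ is regular since $R$ is. Finally,
\[
	\mathcal{L}_R
	=
	\mathcal{E}\cap\mu^{-1}(R),
\]
because a word $u\in\Sigma^*$ lies in $\mathcal{E}$ iff $u=\varphi(w)$ for some $w\in X^*$ with $\overline w\in H$, and then lies additionally in $\mu^{-1}(R)$ iff $w=\mu(u)\in R$; conversely every element of $\mathcal{L}_R$ is of this form. Being an intersection of two regular languages, $\mathcal{L}_R$ is regular by \cref{lem:reg-lang-closure}.

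The only subtlety — and the part to state carefully rather than a genuine obstacle — is verifying that the automaton for $\mathcal{E}$ really does track $\rho$ of prefixes correctly, i.e. that reading $\sigma_{k_1,c_1}\cdots\sigma_{k_i,c_i}$ from the start state lands in state $t_{c_{i+1}}=\rho(x_{k_1}\cdots x_{k_i})$; this is immediate from the recursion $t_{c_{i+1}}=\rho(t_{c_i}x_{k_i})$ noted just before \cref{eq:factor-word}, together with the fact that the dead sink absorbs any letter whose second index is inconsistent with the current state, so that accepted words are precisely coset-labelled words ending in the trivial coset. Everything else is a direct application of the regular-language closure properties.
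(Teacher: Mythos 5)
Your proposal takes essentially the same route as the paper: you build the same finite-state automaton with state set $T$ that tracks $\rho$ of prefixes via the transition $t_j \xrightarrow{\sigma_{i,j}} \rho(t_j x_i)$, accept only at $t_1$ to get the language the paper calls $\Phi_X$ (your $\mathcal{E}$), and conclude via $\mathcal{L}_R = \Phi_X \cap \mu^{-1}(R)$ using closure under intersection and inverse homomorphism. The one small difference is that you explicitly add a dead sink for letters $\sigma_{i,j'}$ read in a state $t_j$ with $j'\neq j$; the paper leaves this implicit, even though its definition of a finite-state automaton requires $\delta$ to be total, so your version is actually the more careful one.
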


\begin{proof}
We first notice that $\mathcal{L}_R$ can be written as the intersection
\[
	\mathcal{L}_R = \Phi_X \cap \mu^{-1}(R)
	\quad\text{where}\quad
	\Phi_X = \varphi(\{w\in X^*\mid \overline{w}\in H\}).
\]
From \cref{lem:reg-lang-closure}, we know that the family of regular languages is closed under intersection and inverse monoid homomorphism.
Thus, $\mu^{-1}(R)$ is a regular language, and it only remains to be shown that $\Phi_X$ is regular.

We may construct a finite-state automaton for $\Phi_X\subseteq \Sigma^*$ with
\begin{itemize}
	\item a state $q_{t_i} \in Q$ for each coset representative $t_i\in T$;
	\item a transition $q_{t_j} \to^{\sigma_{i,j}} q_{t_{j'}}$ for each $\sigma_{i,j}\in \Sigma$ where $t_{j'} = \rho(t_j x_i)$; where
	\item $q_1$ is both the starting state and the only accepting state.
\end{itemize}
Thus, $\Phi_X$ is a regular language as required.
\end{proof}

Since $\varphi$ is injective and length preserving, we see that $\varphi$ gives us a length-preserving bijection from the set of words $\{w \in R \mid \overline{w} \in H\}\in X^*$ to the set of words in $\mathcal{L}_R\subseteq \Sigma^*$.
Thus, we require a method to check membership for $\mathcal{L}_R$, and to verify if $\overline{\mu(w)}=1$ for a given word $w \in \mathcal{L}_R$.

\section{On words which represent the identity}\label{sec:word-problem}

The aim of this section is to prove \cref{lem:det-cf,lem:equation}.
In \cref{thm:main}, we combine these lemmas to show that the series $C_R(z)$ can be written as a diagonal of an algebraic power series.
After which we use \cref{cor:alg-diag-to-rat-diag} to strengthen this to the diagonal of a rational series.

Let $w = x_{k_1}x_{k_2}\cdots x_{k_\ell} \in X^*$ with
$
	\varphi(w) =
	\sigma_{k_1,c_1}
	\sigma_{k_2,c_2}
	\cdots
	\sigma_{k_\ell,c_\ell}
	\in \mathcal{L}_R,
$
then
\[
	\overline{w}
	=
	\left(
	\overline{
		t_{c_1}
		x_{k_1}
		(\rho(t_{c_1}x_{k_1}))^{-1}
	}
	\right)
	\left(
	\overline{
		t_{c_2}
		x_{k_2}
		(\rho(t_{c_2}x_{k_2}))^{-1}
	}
	\right)
	\\\cdots
	\left(
	\overline{
		t_{c_\ell}
		x_{k_\ell}
		(\rho(t_{c_\ell}x_{k_\ell}))^{-1}
	}
	\right)
\]
where each factor represents an element in the subgroup $H$, that is, each
\[
	\overline{
		t_{j}
		x_{i}
		(\rho(t_{j} x_{i}))^{-1}
	}
	\in H.
\]
We define two maps $\pi_{\mathbb{Z}^n}\colon \Sigma^* \to \mathbb{Z}^n$ and $\pi_{F_m}\colon \Sigma^* \to F_m$ as follows

\begin{definition}\label{def:projections}
	We define the maps $\pi_{\mathbb{Z}^n}\colon \Sigma^* \to \mathbb{Z}^n$ and $\pi_{F_m}\colon \Sigma^* \to F_m$ to be the monoid homomorphism such that
	\[
		(
			\pi_{\mathbb{Z}^n}(\sigma_{i,j}),\,
			\pi_{F_m}(\sigma_{i,j})
		)
		=
		\overline{
			t_{j}
			x_{i}
			(\rho(t_{j}x_{i}))^{-1}
		}
		\in \mathbb{Z}^n \times F_m = H
	\]
	for each $\sigma_{i,j} \in \Sigma$.
	We then see that
	\[
		\overline{w} =
		(
			\pi_{\mathbb{Z}^n}(u),\,
			\pi_{F_m}(u)
		)
		\in \mathbb{Z}^n \times F_m
	\]
	for each $w \in X^*$ with $u = \varphi(w) \in \mathcal{L}_R$.
\end{definition}

From \cref{def:projections}, we have the following observation.

\begin{remark}\label{rmk:g-fun-lang}
	We observe that the generating function $C_R(z)$ counts the words $w \in \mathcal{L}_R$ for which both $\pi_{\mathbb{Z}^n}(w) = \mathbf{0}$ and $\pi_{F_m}(w) = 1$.
\end{remark}

From the definition of the map $\pi_{\mathbb{Z}^n}\colon \Sigma^* \to \mathbb{Z}^n$, we define a set $\mathcal{Z} \subseteq \mathbb{N}^{|\Sigma|}$ as in the following lemma.
The generating function of this set is used in \cref{thm:main} to select the terms of a multivariate power series which correspond to the words $w \in \Sigma^*$ for which $\pi_{\mathbb{Z}^n}(w) = \mathbf{0}$.

\begin{lemma}\label{lem:equation}
	Let $\Sigma$ be the alphabet defined in (\ref{eq:sigma}).
	We choose an order on the letters of $\Sigma$, in particular, we write $\Sigma = \{a_1,a_2,\ldots,a_{|\Sigma|}\}$ where each letter $a_k$ corresponds to a letter of the form $\sigma_{i,j}$.
	Let
	\[
		\mathcal{Z} \coloneqq\left\{
			z \in \mathbb{N}^{|\Sigma|}
		\ \middle|\ 
			\pi_{\mathbb{Z}^n}(a_1)\, z_1 + 
			\pi_{\mathbb{Z}^n}(a_2)\, z_2
			+\cdots+
			\pi_{\mathbb{Z}^n}(a_{|\Sigma|})\, z_{|\Sigma|}
			=
			\mathbf{0}
		\right\}.
	\]
	Equivalently, $z \in \mathcal{Z}$ if and only if there is a word $w\in \Sigma^*$ that, for each $i$, contains exactly $z_i$ instances of the letter $a_i$.
	Then, the generating function
	\[
		f_\mathcal{Z}(y_1,y_2,\ldots,y_{|\Sigma|})
		\coloneqq
		\sum_{z\in \mathcal{Z}} y_1^{z_1} y_2^{z_2} \cdots y_{|\Sigma|}^{z_{|\Sigma|}}
	\]
	is $\mathbb{N}$-rational.
	
\end{lemma}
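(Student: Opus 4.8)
The plan is to recognise $\mathcal Z$ as the set of non-negative integer solutions of a homogeneous system of linear equations over $\mathbb Z$, and then to enumerate this solution set \emph{unambiguously}; the unambiguity is precisely what upgrades "rational generating function" to "$\mathbb N$-rational generating function", and is the only subtle point. Writing out the $n$ coordinates of the vector identity defining $\mathcal Z$, we have $\mathcal Z=\{z\in\mathbb N^{|\Sigma|}\mid Mz=\mathbf 0\}$ for the fixed integer matrix $M\in\mathbb Z^{n\times|\Sigma|}$ whose columns are the vectors $\pi_{\mathbb Z^n}(a_k)$. Thus $\mathcal Z$ is a submonoid of $(\mathbb N^{|\Sigma|},+)$ which is moreover \emph{saturated}: one checks directly that $\mathcal Z=\mathrm{cone}(\mathcal Z)\cap\mathbb Z^{|\Sigma|}$, where $\mathrm{cone}(\mathcal Z)\subseteq\mathbb R^{|\Sigma|}_{\ge0}$ is a rational, pointed, polyhedral cone.

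The main step is to decompose these lattice points with unique representations. First I would triangulate $\mathrm{cone}(\mathcal Z)$ into finitely many rational simplicial cones and pass to a \emph{half-open} decomposition, writing $\mathrm{cone}(\mathcal Z)$ as a disjoint union of half-open simplicial cones $\widetilde C_1,\dots,\widetilde C_N$ (a standard manoeuvre in the theory of rational cones). For a half-open simplicial cone with primitive integral ray generators $v^{(\ell)}_1,\dots,v^{(\ell)}_{d_\ell}$, the classical lattice-point decomposition writes $\widetilde C_\ell\cap\mathbb Z^{|\Sigma|}$ as a finite disjoint union of sets of the form $p+\mathbb N v^{(\ell)}_1+\dots+\mathbb N v^{(\ell)}_{d_\ell}$, with $p$ ranging over the finite set $\Pi_\ell$ of lattice points of the corresponding fundamental half-open parallelepiped; because the $v^{(\ell)}_i$ are linearly independent and $\Pi_\ell$ is a set of coset representatives, the expression of each lattice point of $\widetilde C_\ell$ in this union is \emph{unique}. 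Since $\mathcal Z=\mathrm{cone}(\mathcal Z)\cap\mathbb Z^{|\Sigma|}=\bigsqcup_\ell(\widetilde C_\ell\cap\mathbb Z^{|\Sigma|})$, summing the monomials $\mathbf y^{z}$ over each piece yields
\[
   f_{\mathcal Z}(\mathbf y)
   =\sum_{\ell=1}^{N}\ \sum_{p\in\Pi_\ell}\ \mathbf y^{\,p}\prod_{i=1}^{d_\ell}\frac{1}{1-\mathbf y^{\,v^{(\ell)}_i}} ,
\]
in which each factor $1/(1-\mathbf y^{\,v^{(\ell)}_i})$ is the quasi-inverse of the monomial $\mathbf y^{\,v^{(\ell)}_i}$ (which has zero constant term as $v^{(\ell)}_i\ne\mathbf 0$) and $\mathbf y^{\,p}\in\mathbb N[\mathbf y]$; hence the right-hand side is $\mathbb N$-rational by the closure properties recalled in \cref{sec:rational-series}.

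The one place real care is needed is the uniqueness/disjointness above, i.e.\ the passage from a merely rational description of $\mathcal Z$ to an unambiguous one: over a non-commutative alphabet this cannot be done, but $\mathcal Z$ lives in the \emph{commutative} monoid $\mathbb N^{|\Sigma|}$, and commutativity is exactly what makes the disjoint, unique-representation decomposition available. If one prefers to avoid polyhedral geometry there are two equivalent repackagings: (a) apply Gordan's lemma to fix a finite Hilbert basis $g_1,\dots,g_r$ of $\mathcal Z$, take for each $z\in\mathcal Z$ the term-order-least $\lambda\in\mathbb N^r$ with $\sum_j\lambda_jg_j=z$, note that the resulting set $N$ of normal forms is the complement of a monomial ideal and hence a finite disjoint union of shifted faces, so that $\{b_1^{\lambda_1}\cdots b_r^{\lambda_r}\mid\lambda\in N\}$ is a regular language, and re-run the argument of \cref{lem:regular_are_N-rational} on an automaton for it with the transition reading $b_j$ weighted by the monomial $\mathbf y^{g_j}$; or (b) simply invoke the fact that a semilinear subset of $\mathbb N^{k}$ — and $\mathcal Z$ is visibly one, being a single linear set — has an $\mathbb N$-rational generating function. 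I expect the check that $\mathcal Z$ is saturated and the bookkeeping in the half-open decomposition to be routine, the only genuine decision being how much of the semilinear-sets machinery to reprove rather than cite.
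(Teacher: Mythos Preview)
Your proposal is correct, and your alternative (b) is exactly the route the paper takes: observe that $\mathcal Z$ is semilinear (the paper cites Ginsburg--Spanier on Presburger sets for this, where you would use Gordan's lemma), then cite Eilenberg--Sch\"utzenberger's theorem that every semilinear set decomposes as a \emph{disjoint} union of \emph{simple} linear sets (linear sets with linearly independent period vectors), each of which has an $\mathbb N$-rational generating function by the explicit formula you wrote down.

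Your main polyhedral argument---triangulate the cone into half-open simplicial pieces and enumerate lattice points in each via the fundamental parallelepiped---is not a different proof so much as an explicit reproof of the Eilenberg--Sch\"utzenberger decomposition in this special case. It has the advantage of being self-contained and of making the unambiguity visible geometrically; the paper's version has the advantage of brevity and of separating the two logically independent ingredients (semilinearity of $\mathcal Z$; $\mathbb N$-rationality for semilinear sets). One small remark: your claim in (b) that $\mathcal Z$ is ``visibly'' a single linear set already presupposes finite generation of the monoid $\mathcal Z$, i.e.\ Gordan's lemma, so it is not quite for free---but you clearly know this, since option (a) invokes Gordan explicitly.
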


\begin{proof}
In this proof, for each $z=(z_1,z_2,\ldots,z_{|\Sigma|})\in \mathbb{N}^{|\Sigma|}$, we write
\[
	\mathbf{x}^z
	\coloneqq
	x_1^{z_1}x_2^{z_2}\cdots x_{|\Sigma|}^{z_{|\Sigma|}}.
\]
This notation will simplify the explanations given in this proof.
	
In this proof, we will show that the class of \emph{semilinear} sets have $\mathbb{N}$-rational generating function, and that $\mathcal{Z} \subseteq \mathbb{N}^{|\Sigma|}$ is semilinear.
We begin by recalling the definition of semilinear sets as follows.
	
We say that a subset of $\mathbb{N}^{|\Sigma|}$ is \emph{semilinear} if it can be written as a union of finitely many sets of the form
\[
L(v,\{u_1,u_2,\ldots,u_m\})
=
\{
v+ k_1 u_1 + k_2 u_2 +\cdots+ k_m u_m \in \mathbb{N}^{|\Sigma|}
\mid
k_i \in \mathbb{N}
\}
\]
where $v, u_1,u_2,\ldots,u_m \in \mathbb{N}^{|\Sigma|}$.
Such a set $L(v,\{u_1,\ldots,u_m\})$ is called \emph{linear}.

We see that if the vectors $u_1,u_2,\ldots,u_m$ are linearly independent, with respect to the field $\mathbb{Q}$, then the generating function of the linear set
\[
	L = L(v,\{u_1,u_2,\ldots,u_m\})
\]
is given by
\[
	\sum_{z\in L}
	\mathbf{x}^z
	=
	\mathbf{x}^v
	\prod_{i=1}^m
	\left(
		1-
		\mathbf{x}^{u_i}
	\right)^{-1}
\]
which is $\mathbb{N}$-rational.
Such a linear set is called \emph{simple}.

It is known from Theorem~IV in \cite{Eilenberg1969} that any semilinear set can be written as a disjoint union of finitely many simple linear sets.
Thus, the generating function of a semilinear set can be written as the sum of finitely many $\mathbb{N}$-rational series, and thus is itself $\mathbb{N}$-rational.
All that remains is to show that $\mathcal{Z}$ is semilinear.

We see that $\mathcal{Z}$ is a \textit{Presburger set} as defined in p.~287 of \cite{ginsburg1966}.
In Theorem~1.3 of \cite{ginsburg1966}, it is shown that every Presburger set is semilinear.
We then conclude that $\mathcal{Z}$ is semilinear and thus has an $\mathbb{N}$-rational generating function.
\end{proof}

\Cref{lem:equation} gives us a method of verifying if $\pi_{\mathbb{Z}^n}(w)=\mathbf{0}$ for words $w\in \Sigma^*$, which we describe as follows.
Let $\Sigma = \{a_1,a_2,\ldots,a_{|\Sigma|}\}$ as in \cref{lem:equation}.
Notice that if $w \in \Sigma^*$ and \[z = (\#_{a_1}(w), \#_{a_2}(w),\ldots,\#_{a_{|\Sigma|}}(w) ),\]
where each $\#_{a_i}(w)$ counts the occurrences of the letter $a_i$ in the word $w$.
Then $\pi_{\mathbb{Z}^n}(w) = \mathbf{0}$ if and only if $z \in \mathcal{Z}$.
Thus, we only need a method to verify if $\pi_{F_m}(w) = 1$ for each word $w \in \Sigma^*$.
For this, we provide a deterministic context-free language as follows.

\begin{lemma}\label{lem:det-cf}
	The formal language
	\[
		\mathcal{D}_R\coloneqq\mathcal{L}_R \cap \pi_{F_m}^{-1}(1)\subseteq\Sigma^*
	\]
	is deterministic context-free.
\end{lemma}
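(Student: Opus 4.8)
The plan is to realize $\mathcal{D}_R$ as the intersection of $\mathcal{L}_R$ with a deterministic context-free language capturing the condition $\pi_{F_m}(w) = 1$, and then invoke the closure properties of \cref{lem:intersecion-dcf-regular} together with \cref{lem:dcf-iff-vfree}. The starting point is the observation (already recorded in \cref{rmk:g-fun-lang}) that $\pi_{F_m}\colon\Sigma^*\to F_m$ is a monoid homomorphism into the free group $F_m$; its fibre over the identity, $\pi_{F_m}^{-1}(1)$, is by definition the preimage under a monoid homomorphism of the word problem of $F_m$ with respect to some generating set. So the whole statement will follow once we check that the word problem of $F_m$ is deterministic context-free and that the relevant preimage operation preserves this class.

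The key steps, in order, are as follows. First I would fix a finite symmetric generating set $S$ for $F_m$ — for instance $\{f_1^{\pm1},\ldots,f_m^{\pm1}\}$ — so that $\mathrm{WP}_S = \{u\in S^*\mid \overline{u}=1\}$ is defined; since $F_m$ is (virtually) free, \cref{lem:dcf-iff-vfree} gives that $\mathrm{WP}_S$ is deterministic context-free. Second, I would express $\pi_{F_m}$ as a composite $\Sigma^*\xrightarrow{\ \psi\ }S^*\xrightarrow{\ \overline{\ \cdot\ }\ }F_m$, where $\psi$ is a monoid homomorphism sending each letter $\sigma_{i,j}$ to a fixed word in $S^*$ representing the free-group element $\pi_{F_m}(\sigma_{i,j})$ (one exists because every element of $F_m$ is represented by some word over a symmetric generating set). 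Then $\pi_{F_m}^{-1}(1) = \psi^{-1}(\mathrm{WP}_S)$, and by the second half of \cref{lem:intersecion-dcf-regular} this preimage is deterministic context-free. Third, recall from \cref{def:L_R} that $\mathcal{L}_R\subseteq\Sigma^*$ is regular, so by the first half of \cref{lem:intersecion-dcf-regular} the intersection $\mathcal{D}_R = \mathcal{L}_R\cap\pi_{F_m}^{-1}(1)$ — a deterministic context-free language intersected with a regular language — is again deterministic context-free, as claimed.

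The only subtlety worth flagging is that \cref{lem:intersecion-dcf-regular} as stated phrases the inverse-homomorphism closure for a homomorphism $p\colon\Sigma^*\to X^*$ and a deterministic context-free $L\subseteq X^*$; here we must take $L = \mathrm{WP}_S\subseteq S^*$ and $p = \psi$, which is exactly of that form, so there is no real gap. Strictly speaking $F_m$ with $m=0$ is trivial, in which case $\pi_{F_m}^{-1}(1) = \Sigma^*$ and $\mathcal{D}_R = \mathcal{L}_R$ is regular, hence (deterministic) context-free; so the degenerate case causes no trouble. I do not expect a genuine obstacle in this lemma — it is a bookkeeping assembly of \cref{lem:dcf-iff-vfree} and \cref{lem:intersecion-dcf-regular} — the real work of the paper lies in combining $\mathcal{D}_R$ with the $\mathbb{N}$-rational generating function $f_\mathcal{Z}$ of \cref{lem:equation} to extract $C_R(z)$ as a diagonal, which is deferred to \cref{sec:computing-cogrowth}.
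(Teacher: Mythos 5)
Your proposal is correct and follows essentially the same route as the paper: write $\pi_{F_m}$ as a monoid homomorphism into $S^*$ followed by evaluation, apply \cref{lem:dcf-iff-vfree} to $\mathrm{WP}_S$, pull back via \cref{lem:intersecion-dcf-regular}, and intersect with the regular $\mathcal{L}_R$. The only addition is the (harmless, unremarked-on in the paper) degenerate case $m=0$.
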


\begin{proof}
From \cref{def:L_R}, we see that $\mathcal{L}_R$ is a regular language.
Then, from \cref{lem:intersecion-dcf-regular}, we see that it only remains to be shown that $\pi^{-1}_{F_m}(1) \subseteq \Sigma^*$ is deterministic context-free.

Let $S = \{s_1,s_2,\ldots,s_m, s_1^{-1},s_2^{-1},\ldots,s_m^{-1}\}$ be a finite symmetric generating set for the free group $F_m$.
Let $f\colon \Sigma^* \to S^*$ be a monoid homomorphism defined such that $\pi_{F_m}(\sigma_{i,j}) = \overline{f(\sigma_{i,j})}$ for each $\sigma_{i,j} \in \Sigma$.
Then,
\[
	\pi_{F_m}^{-1}(1)
	=
	f^{-1}
	\left(
		\{
			w \in S^*
		\mid
			\overline{w} = 1
		\}
	\right).
\]
From \cref{lem:dcf-iff-vfree}, we see that $\{w\in S^* \mid \overline{w}=1\}$ is deterministic context-free.
Since $f \colon \Sigma^* \to S^*$ is a monoid homomorphism, it follows from \cref{lem:intersecion-dcf-regular} that $\pi_{F_m}^{-1}(1)$ is also deterministic context-free.
\end{proof}

\section{Main theorem}\label{sec:computing-cogrowth}

The purpose of this section is to prove our main results which follows from some algebraic manipulations on the algebraic generating function of the deterministic context-free language in \cref{lem:det-cf}, and the rational generating function as in \cref{lem:equation}.

\ThmA

\begin{proof}
Let $\Sigma = \{a_1,a_2,\ldots,a_{|\Sigma|}\}$ and $f_\mathcal{Z}(\mathbf{y})$ be the labelling of $\Sigma$, and generating function from \cref{lem:equation}.
That is,
\[
	f_\mathcal{Z}(y_1,y_2,\ldots,y_{|\Sigma|})
	=
	\sum_{z\in \mathcal{Z}}
	y_1^{z_1} y_2^{z_2} \cdots y_{|\Sigma|}^{z_{|\Sigma|}}
\]
where $z\in \mathcal{Z}$ if and only if $\pi_{\mathbb{Z}^n}(w)=\mathbf{0}$ for at least one or equivalently every word $w \in \Sigma^*$ which, for each $i$, contains exactly $z_i$ instances of $a_i$.

Let $\mathcal{D}_R$ be the deterministic context-free language from \cref{lem:det-cf}.
Then, with $\Sigma$ labelled as above, we conclude from \cref{lem:gfun,lem:det-implies-unambiguous}, that the generating function
\[
	g(x_1,\ldots,x_{|\Sigma|},z)
	=
	\sum_{k_1,\ldots,k_{|\Sigma|}}
	c(k_1,\ldots,k_{|\Sigma|})
	x_1^{k_1} \cdots x_{|\Sigma|}^{k_{|\Sigma|}}
	z^{k_1+\cdots + k_{|\Sigma|}}
\]
for $\mathcal{D}_R$
is algebraic.
Notice from the definition of the generating function in \cref{lem:det-implies-unambiguous} that $c(k_1,k_2,\ldots,k_{|\Sigma|})$ is non-zero if and only if there is a word $w \in \mathcal{D}_R $ which contains exactly $k_i$ instances of the letter $a_i$ for each $i$.

From \cref{lem:alg-closure}, we see that
\[
	A(\mathbf{x},\mathbf{y},z)
	=
	g(x_1,x_2,\ldots,x_{|\Sigma|},z)
	f_\mathcal{Z}(y_1,y_2,\ldots,y_{|\Sigma|})
\]
is algebraic.
Notice that the words in the language $\mathcal{D}_R \cap \pi_{\mathbb{Z}^n}^{-1}(\mathbf{0})$ are counted by the coefficients of the terms in $A(\mathbf{x},\mathbf{y},z)$ of the form
\begin{equation}\label{eq:term}
	x_1^{k_1}
	x_2^{k_2}
	\cdots
	x_{|\Sigma|}^{k_{|\Sigma|}}
	\,
	y_1^{k_1}
	y_2^{k_2}
	\cdots
	y_{|\Sigma|}^{k_{|\Sigma|}}
	\,
	z^{\ell}.
\end{equation}
That is, the terms in (\ref{eq:term}) are precisely the ones where the power of $x_i$ and $y_i$ are equal for each $i$.
Moreover, we recall that the single variable generating function of $\mathcal{D}_R\cap\pi_{\mathbb{Z}^n}^{-1}(\mathbf{0})$ is precisely the cogrowth series $C_R(z)$.

At this point of the proof, we can write the cogrowth series $C_R(z)$ by performing $|\Sigma|$ primitive diagonals, and a variable substitution, to the generating function $A(\mathbf{x},\mathbf{y},z)$.
In order to express this as one complete diagonal, we modify this generating function in the following way.

From \cref{lem:alg-closure}, we see that
\[
	B(\mathbf{x},\mathbf{y},z)
	=
	\left(
		\prod_{j=1}^{|\Sigma|}
		\frac{1}{1-x_j y_j}
	\right)
	A(\mathbf{x},\mathbf{y},z),
\]
is algebraic.
We notice then that the coefficients of terms
\[
	x_1^{\ell}
	x_2^{\ell}
	\cdots
	x_{|\Sigma|}^{\ell}
	\,
	y_1^{\ell}
	y_2^{\ell}
	\cdots
	y_{|\Sigma|}^{\ell}
	\,
	z^{\ell}
\]
in $B(\mathbf{x},\mathbf{y},z)$ are sums of coefficients of terms of the form (\ref{eq:term}) in $A(\mathbf{x},\mathbf{y},z)$.
In particular, we see that
\[
	C_R(z) = \mathrm{Diag}(B)(z)
\]
since $\mathrm{Diag}(B)(z)$ is the univariate generating function of $\mathcal{D}_R\cap \pi_{\mathbb{Z}^n}^{-1}(\mathbf{0})$.
Thus, the cogrowth series $C_R(z)$ is the diagonal of an algebraic series.

Our result then follows from \cref{cor:alg-diag-to-rat-diag}.
\end{proof}

One application of \cref{thm:main} is to the Baumslag-Solitar groups $\mathrm{BS}(N,N)$.
In particular, we may combine \cref{thm:main,lem:BS_NN} to obtain the following result.

\CorAa

\begin{proof}
	We first notice that $\mathrm{BS}(0,0) = F_2$ and $\mathrm{BS}(-N,-N) = \mathrm{BS}(N,N)$.
	From \cref{lem:BS_NN}, for each $N\geq 1$, the group $\mathrm{BS}(N,N)$ contains $\mathbb{Z}\times F_N$ as a finite-index subgroup.
	Thus, our result follows from \cref{thm:main}.
\end{proof}

Notice that \cref{thm:main} can also be applied to show that the cogrowth series of any virtually abelian group can be written as the diagonal of a rational series.
In the following, we show a stronger result, i.e., that in the virtually abelian case we can demand that the rational series is $\mathbb{N}$-rational.

\ThmB

\begin{proof}
From \cref{lem:normal-abelian}, we may assume without loss of generality that $G$ contains $H=\mathbb{Z}^n$ as a finite-index normal subgroup.

Let $\mathcal{L}_R \subseteq \Sigma^*$, $\mathcal{Z} \subseteq \mathbb{N}^{|\Sigma|}$ and $\pi_{\mathbb{Z}^n}\colon \Sigma^* \to \mathbb{N}^n$ be defined as in 
\cref{def:L_R},
\cref{lem:equation} and
\cref{def:projections}, respectively.
Moreover, in this proof, we fix an order on the alphabet $
\Sigma = \{
a_1, a_2,\ldots,a_{|\Sigma|}
\}
$.
We now notice that a word $w \in \Sigma^*$ is counted by $C_R(z)$ if and only if both $w \in \mathcal{L}_R$ and $\pi_{\mathbb{Z}^n}(w)=\mathbf{0}$.
In particular, this mean that we do not need to use the deterministic context-free language given in \cref{lem:det-cf} (we only needed this language to deal with non-abelian free subgroups).

From \cref{lem:regular_are_N-rational}, we see that the generating function
\[
f_{\mathcal{L}_R}(\mathbf{x},z)
=
\sum_{k_1,k_2,\ldots,k_{|\Sigma|}}
c(k_1,k_2,\ldots,k_{|\Sigma|})
x_1^{k_1}x_2^{k_2}\cdots x_{|\Sigma|}^{k_{|\Sigma|}}
z^{k_1+k_2+\cdots + k_{|\Sigma|}}
\]
 of $\mathcal{L}_R$ is $\mathbb{N}$-rational.
Let $f_{\mathcal{Z}}(\mathbf{y})$ be the $\mathbb{N}$-rational generating function of $\mathcal{Z}\subseteq\mathbb{N}^{|\Sigma|}$ from \cref{lem:equation}.
We then see that the generating function
\[
	A(\mathbf{x},\mathbf{y},z)
	=
	f_{\mathcal{L}_R}(\mathbf{x},z)
	f_{\mathcal{Z}}(\mathbf{y})
\]
is $\mathbb{N}$-rational.
Moreover, from the definition of the set $\mathcal{Z}$ in \cref{lem:equation}, we see that the words $w = w_1 w_2\cdots w_\ell \in \mathcal{L}_R$ with $\pi_{\mathbb{Z}^n}(w)=\mathbf{0}$ are counted by the coefficients of terms of $A(\mathbf{x},\mathbf{y},z)$ of the form
\begin{equation}\label{eq:term2}
	x_1^{k_1}
	x_2^{k_2}
	\cdots
	x_{|\Sigma|}^{k_{|\Sigma|}}
	\,
	y_1^{k_1}
	y_2^{k_2}
	\cdots
	y_{|\Sigma|}^{k_{|\Sigma|}}
	\,
	z^{\ell}.
\end{equation}
From the definition of $f_{\mathcal{L}_R}(\mathbf{x},z)$, for each term of the form (\ref{eq:term2}) with a non-zero coefficient, we notice that $k_i \leq \ell$ for each $i$.

At this point of the proof, we can write the cogrowth series $C_R(z)$ by performing $|\Sigma|$ primitive diagonals, and a variable substitution, to the generating function $A(\mathbf{x},\mathbf{y},z)$.
In order to express this as one complete diagonal, we modify this generating function in the following way.

We notice that
\[
B(\mathbf{x},\mathbf{y},z)
=
\left(
\prod_{j=1}^{|\Sigma|}
\frac{1}{1-x_j y_j}
\right)
A(\mathbf{x},\mathbf{y},z),
\]
is $\mathbb{N}$-rational, and that the coefficients of terms
\[
x_1^{\ell}
x_2^{\ell}
\cdots
x_{|\Sigma|}^{\ell}
\,
y_1^{\ell}
y_2^{\ell}
\cdots
y_{|\Sigma|}^{\ell}
\,
z^{\ell}
\]
in $B(\mathbf{x},\mathbf{y},z)$ are sums of coefficients of terms of the form (\ref{eq:term2}) in $A(\mathbf{x},\mathbf{y},z)$.
In particular, we see that
$
\mathrm{Diag}(B)(z)
$
is the single variable generating function of $\mathcal{L}_R\cap \pi^{-1}_{\mathbb{Z}^n}(\mathbf{0})$.
We then conclude by writing that the cogrowth series as $C_R(z) = \mathrm{Diag}(B)(z)$, i.e., as the diagonal of an $\mathbb{N}$-rational series.
\end{proof}

\section*{Acknowledgements}

We thank Tatiana Nagnibeda and Corentin Bodart for bringing the questions addressed in this paper to the attention of the author, and for their feedback on early drafts of this paper.
We also thank Murray Elder for suggested improvements to the notation of early versions of the paper.
Additionally, we thank Igor Pak for suggested improvements and citations.

This work is supported by the Swiss Government Excellence Scholarship, and Swiss NSF grant 200020-200400.

\end{document}